\theoremstyle{plain}
\newcommand{\newreptheorem}[2]{\newtheorem*{rep@#1}{\rep@title}\newenvironment{rep#1}[1]{\def\rep@title{#2 \ref*{##1}}\begin{rep@#1}}{\end{rep@#1}}}
\newtheorem{theorem}{Theorem}[section]
\newtheorem*{theorem-non}{Theorem}
\newtheorem*{non-lemma}{Lemma}
\newtheorem{lemma}[theorem]{Lemma}
\newtheorem{claim}[theorem]{Claim}
\theoremstyle{definition}
\DeclareMathOperator{\Sig}{Sig}
\DeclareMathOperator{\rang}{rank}
\DeclareMathOperator{\tyg}{Type}
\DeclareMathOperator{\RowS}{RowSpace}
\DeclareMathOperator{\cok}{cok}
\DeclareMathOperator{\Hom}{Hom}
\DeclareMathOperator{\Aut}{Aut}
\DeclareMathOperator{\Sg}{Sub}
\begin{document}
\title{Universal constant order fluctuations for the cokernels of block triangular matrices}
\author{Andr\'as M\'esz\'aros}
\date{}
\affil{HUN-REN Alfr\'ed R\'enyi Institute of Mathematics, Budapest, Hungary}
\maketitle
\begin{abstract}
We prove that for a large class of random block lower triangular matrices, the Sylow \break $p$-subgroups of their cokernels have the same constant order fluctuations as that of the matrix products studied by Nguyen and Van Peski~\cite{nguyen2024rank}. We also show that the theorem of Nguyen and Van Peski remains true under a weaker assumption on the number of factors in the matrix products.
\end{abstract}

\section{Introduction}

The \emph{cokernel} of an $n\times n$ integral matrix $\mathbf{M}$ is defined as
\[\cok(\mathbf{M})=\mathbb{Z}^n/\RowS(\mathbf{M}),\]
where $\RowS(\mathbf{M})$ is the subgroup of $\mathbb{Z}^n$ generated by the rows of $\mathbf{M}$. When we study the cokernels of random matrices, we are often interested in the Sylow $p$-subgroup of the cokernel. There is a widespread \emph{universality} phenomenon for the cokernels of random matrices, that is, in many cases the limiting behavior of the Sylow $p$-subgroup of the cokernel does not depend on the finer details of the given random matrix model. The simplest form of this universality was proved by Wood~\cite{wood2019random}: Given $\varepsilon>0$ and a prime $p$, we say that a $\mathbb{Z}$-valued random variable $X$ is $(p,\varepsilon)$-balanced if $\mathbb{P}(X\equiv r\mod p)\le 1-\varepsilon$ for all $r\in \mathbb{Z}$. Wood proved that if $\mathbf{M}_n$ is a random $n\times n$ matrix with independent $(p,\varepsilon)$-balanced entries, then the distribution of the Sylow $p$-group of $\cok(\mathbf{M}_n)$ converges to the \emph{Cohen-Lenstra distribution}. The Cohen-Lenstra distribution is a distribution on the finite abelian $p$-groups, where the probability assigned to a group $G$ is proportional to $|\Aut(G)|^{-1}$. This distribution first appeared in conjectures about the distribution of class groups of quadratic number fields~\cite{cohen2006heuristics}. Note that this limiting distribution does not depend on the specific distribution of the entries. Nguyen and Wood~\cite{nguyen2022random} weakened the assumptions of the theorem above: It is enough to assume that the entries of $\mathbf{M}_n$ are $(p,\varepsilon_n)$-balanced where $\varepsilon_n$ converges to $0$ sufficiently slowly. The proofs of these theorems rely on the \emph{moment method} of Wood for random finite abelian groups~\cite{wood2017distribution,wood2022probability}. 

If a random matrix model has some additional algebraic structure, then this algebraic structure is also reflected in the distribution of the cokernel. For example, for symmetric matrices, the cokernel can be equipped with a perfect, symmetric, bilinear pairing. Thus, to obtain the limiting distribution for the cokernels of symmetric matrices, one needs to modify the Cohen-Lenstra distribution by taking into account the number of such pairings~\cite{clancy2015note,clancy2015cohen,wood2017distribution}. A similar phenomenon occurs for skew symmetric matrices~\cite{delaunay2001heuristics,bhargava2015modeling,nguyen2022local}, and for $p$-adic Hermitian matrices~\cite{lee2023universality}. We again see that the limiting behavior of the cokernel is universal: it depends on the symmetry class of the random matrix model, but not on the specific distribution of the entries. See \cite{recent2,recent3,recent4,recent5,recent6,recent7,recent8,meszaros2020distribution,meszaros2023cohen,meszaros2024Zpband,gorokhovsky2024time,kang2024random} for further results on the cokernels of random matrices.

Nguyen and Van Peski~\cite{nguyen2024universality,nguyen2024rank} considered \emph{matrix products} of the form $\mathbf{A}_1\mathbf{A}_2\cdots \mathbf{A}_k$, where $\mathbf{A}_1,\dots,\mathbf{A}_k$ are i.i.d. $n\times n$ random matrices, where the entries are i.i.d. copies of some $\mathbb{Z}$-valued random variable which is non-constant mod $p$. Let $\Gamma$ be the Sylow $p$-group of $\cok(\mathbf{A}_1\mathbf{A}_2\cdots \mathbf{A}_k)$. For fixed~$k$, Nguyen and Van Peski showed that as $n\to\infty$, $\Gamma$ has a limiting distribution~\cite{nguyen2024universality}. This limiting distribution is a modified version of the Cohen-Lenstra distribution where the probability assigned to a group $G$ is obtained by also taking into account the number of chains of subgroups of~$G$ of length~$k$. When $k$ goes to infinity together with $n$, the size of $\Gamma$ goes to infinity. Thus, $\Gamma$ can not have a limiting distribution in the usual sense. However, Nguyen and Van Peski~\cite{nguyen2024rank} proved that if $n,k\to \infty$ such that $k=O(e^{(\log n)^{1-\varepsilon}})$, then the fluctuation of $\Gamma$ stays of constant order. They also determined the limiting distribution of these fluctuations. The limit again does not depend on the distribution of the entries. See Theorem~\ref{nguyenvanPeski} for a more detailed statement. This is a new interesting phenomenon, so it is a natural question whether we can see the same type of constant order fluctuations for other random groups of growing order.

Let us consider the block lower bidiagonal matrix
\begin{equation}\label{Cconstruct}\mathbf{C}=\begin{pmatrix}
\mathbf{A}_1&\mathbf{0}&\ddots&&&\\
\mathbf{I}&\mathbf{A}_2&\mathbf{0}&\ddots&\\
\mathbf{0}&\mathbf{I}&\mathbf{A}_3&\mathbf{0}&\ddots&\\
\ddots&\mathbf{0}&\mathbf{I}&\mathbf{A}_4&\mathbf{0}&\ddots&\\
&\ddots&\ddots&\ddots&\ddots&\ddots\\
& &\ddots&\mathbf{0}&\mathbf{I}&\mathbf{A}_k
\end{pmatrix}.
\end{equation}

One can prove that
\begin{equation}\label{cokidentity}
\cok(\mathbf{C})\cong \cok(\mathbf{A}_1\mathbf{A}_2\cdots \mathbf{A}_k),
\end{equation}
see Section~\ref{appendix}. Thus, $\cok(\mathbf{C})$ shows the same behavior as the cokernel of matrix products. Given the prevalence of the universality phenomenon in the study of cokernels of random matrices, one could expect the same behavior from many other random \emph{block lower triangular} matrix models. Theorem~\ref{thmmain} below provides such examples of block lower triangular matrices. Therefore, the hidden block lower triangular structure of matrix products gives a more general context where the constant order fluctuations found by Nguyen and Van Peski can be observed universally. 

Now we describe our setting:

Let $\mathcal{P}$ be a subset of primes, and let $\varepsilon>0$. We say that a $\mathbb{Z}$-valued random variable $X$ is $(\mathcal{P},\varepsilon)$-balanced if 
\[\mathbb{P}(X\equiv r\mod p)\le 1-\varepsilon\qquad\text{ for all }p\in \mathcal{P}\text{ and } r\in\mathbb{Z}.\]
We say that a random matrix over $\mathbb{Z}$ is $(\mathcal{P},\varepsilon)$-balanced if it has independent $(\mathcal{P},\varepsilon)$-balanced entries.

Let $\mathbf{A}^{(h)}=(\mathbf{A}^{(h)}_{i,j})_{1\le i,j\le k(h)}$ and $\mathbf{B}^{(h)}=(\mathbf{B}^{(h)}_{i,j})_{1\le i,j\le k(h)}$ be sequences of random block matrices over the integers such that the blocks $\mathbf{A}^{(h)}_{i,j}$ and $\mathbf{B}^{(h)}_{i,j}$ are $n_i(h)\times n_j(h)$ matrices. 

Let $\mathbf{C}^{(h)}=\mathbf{A}^{(h)}+\mathbf{B}^{(h)}$. We also define \[n_\bullet(h)=\min_{i=1}^{k(h)} n_i(h)\qquad\text{ and }\qquad n(h)=\sum_{i=1}^{k(h)} n_i(h).\]

Furthermore, assume that:
\begin{enumerate}[label=(A.\arabic*)]
\item \label{A1} $\mathbf{A}^{(h)}$ and $\mathbf{B}^{(h)}$ are independent;
\item \label{A2} If $j\notin\{i-1,i\}$, then $\mathbf{A}^{(h)}_{i,j}=0$;
\item \label{A3} The blocks $\mathbf{A}^{(h)}_{i,j}$ ($i\in\{1,2,\dots,k(h)\}$, $j\in\{i-1,i\}$) are independent $(\mathcal{P},\varepsilon)$-balanced random matrices;
\item \label{A4} If $j\ge i-1$, then $\mathbf{B}^{(h)}_{i,j}=0$;
\item \label{A5} \[\lim_{h\to\infty} k(h)=\infty;\]
\item \label{A6} \[\lim_{h\to\infty} n_\bullet(h)=\infty;\]
\item \label{A7} \[\lim_{h\to\infty} \frac{\log k(h)}{n_\bullet(h)}=0.\]
\end{enumerate}

Given a finite abelian group $G$ and a nonnegative integer $i$, let $c(G,i)$ be the number of chains of subgroups $\{0\}\subsetneq H_1\subsetneq H_2\subsetneq \cdots\subsetneq H_i\subseteq G$ of length $i$. The largest possible length $\ell(G)$ of such a chain can be obtained as $\ell(G)=\sum e_i$, where $|G|=\prod p_i^{e_i}$ is the prime factorization of~$|G|$.

\begin{theorem}\label{homlemma}
 Let $G$ be a finite abelian group, let $\mathcal{P}$ be the set of prime divisors of $|G|$. Then for any sequence of random matrices $\mathbf{C}^{(h)}$ satisfying the assumptions \ref{A1}--\ref{A7} above, we have
 \[\lim_{h\to\infty} \frac{\mathbb{E}|\Hom(\cok(\mathbf{C}^{(h)}),G)|}{k(h)^{\ell(G)}}=\frac{c(G,\ell(G))}{\ell(G)!}.\]
\end{theorem}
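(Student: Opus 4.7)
The plan is to compute the moment $\mathbb{E}|\Hom(\cok(\mathbf{C}^{(h)}),G)|$ directly and match it to the claimed asymptotic. I identify $\Hom(\cok(\mathbf{C}^{(h)}),G)\cong\{\mathbf{v}\in G^{n(h)}:\mathbf{C}^{(h)}\mathbf{v}=0\}$ and split $\mathbf{v}=(\mathbf{v}_1,\ldots,\mathbf{v}_{k(h)})$ conformably with the block structure (writing $\mathbf{v}_0=0$). Conditioning on $\mathbf{B}^{(h)}$ and setting $\mathbf{d}_i:=-\sum_{j\le i-2}\mathbf{B}^{(h)}_{i,j}\mathbf{v}_j$, the independence assumptions \ref{A1} and \ref{A3} factor the conditional probability:
\[\mathbb{P}(\mathbf{C}^{(h)}\mathbf{v}=0 \mid \mathbf{B}^{(h)})=\prod_{i=1}^{k(h)}\mathbb{P}\!\left(\mathbf{A}^{(h)}_{i,i}\mathbf{v}_i+\mathbf{A}^{(h)}_{i,i-1}\mathbf{v}_{i-1}=\mathbf{d}_i\right).\]

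I then apply a Wood-style Fourier estimate: for any $(\mathcal{P},\varepsilon)$-balanced matrix $\mathbf{A}$ with $m$ rows and any $\mathbf{v}\in G^n$, the distribution of $\mathbf{A}\mathbf{v}$ is close in total variation to the uniform distribution on $\langle\mathbf{v}\rangle^m$, with error $O(\delta^m)$ for some $\delta=\delta(G,\varepsilon)<1$ (the prime divisors of $|G|$ lie in $\mathcal{P}$). Setting $H_i:=\langle\mathbf{v}_i\rangle\le G$ with $H_0=0$ and noting that the convolution of the uniforms on $H_{i-1}^{n_i(h)}$ and $H_i^{n_i(h)}$ is the uniform on $(H_{i-1}+H_i)^{n_i(h)}$, this yields
\[\mathbb{P}\!\left(\mathbf{A}^{(h)}_{i,i}\mathbf{v}_i+\mathbf{A}^{(h)}_{i,i-1}\mathbf{v}_{i-1}=\mathbf{d}_i\right)=\frac{\mathbbm{1}[\mathbf{d}_i\in(H_{i-1}+H_i)^{n_i(h)}]}{|H_{i-1}+H_i|^{n_i(h)}}+O(\delta^{n_i(h)}).\]
Grouping the sum over $\mathbf{v}$ by its \emph{subgroup profile} $(H_1,\ldots,H_{k(h)})$ and using that $|\{\mathbf{v}_i\in G^{n_i(h)}:\langle\mathbf{v}_i\rangle=H_i\}|=|H_i|^{n_i(h)}(1+O(\delta^{n_i(h)}))$ by inclusion--exclusion, the net leading weight per block equals $|H_i|^{n_i(h)}/|H_{i-1}+H_i|^{n_i(h)}$. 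This weight is $1$ when $H_{i-1}\subseteq H_i$ and at most $p^{-n_i(h)}$ for $p=\min\mathcal{P}$ otherwise; the accumulated multiplicative error across the $k(h)$ factors is $1+O(k(h)\delta^{n_\bullet(h)})=1+o(1)$ by \ref{A7}.

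In the \emph{weak-chain} case $0=H_0\subseteq H_1\subseteq\cdots\subseteq H_{k(h)}\le G$, every $\mathbf{B}^{(h)}_{i,j}\mathbf{v}_j$ lies in $H_j^{n_i(h)}\subseteq H_i^{n_i(h)}=(H_{i-1}+H_i)^{n_i(h)}$, so the indicator is automatically $1$ and the $\mathbf{B}^{(h)}$-dependence vanishes; summing $1$ over all weak chains gives $\sum_{s=0}^{\ell(G)}c(G,s)\binom{k(h)}{s}$ (parametrize each weak chain by its underlying strict chain of length $s$ together with the $\binom{k(h)}{s}$ positions at which it advances), whose leading asymptotics is $\frac{c(G,\ell(G))}{\ell(G)!}k(h)^{\ell(G)}$. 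I expect the main technical obstacle to be controlling the non-chain contribution: group such profiles by the set $I\subseteq\{2,\ldots,k(h)\}$ of indices where $H_{i-1}\not\subseteq H_i$; the number of profiles with $|I|=t$ is at most $\binom{k(h)-1}{t}|\Sub(G)|^{O(t)}\cdot O(k(h)^{\ell(G)})$ (the last factor absorbing the intervening weak-chain segments), and they come with the penalty $p^{-tn_\bullet(h)}$. By \ref{A7}, $k(h)/p^{n_\bullet(h)}\to0$, so $\sum_{t\ge1}(k(h)/p^{n_\bullet(h)})^t=o(1)$ and the non-chain total is $o(k(h)^{\ell(G)})$. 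Combined with the chain count, this proves the theorem.
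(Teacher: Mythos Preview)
Your overall strategy---write the moment as $\sum_{\mathbf{g}}\mathbb{P}(\mathbf{C}\mathbf{g}=0)$, factor over blocks, group $\mathbf{g}$ by its subgroup profile $(\langle\mathbf{g}_1\rangle,\ldots,\langle\mathbf{g}_k\rangle)$, and separate weak chains from the rest---is exactly the paper's. The genuine gap is your equidistribution input. You claim that for \emph{every} $\mathbf{v}\in G^n$ the law of $\mathbf{A}\mathbf{v}$ is within $O(\delta^m)$ in total variation of the uniform law on $\langle\mathbf{v}\rangle^m$. This is false: take $\mathbf{v}=(g,0,\ldots,0)$ with $g$ a generator of $G$; then $\mathbf{A}\mathbf{v}$ is simply the first column of $\mathbf{A}$, whose entries are i.i.d.\ with the given $(\mathcal{P},\varepsilon)$-balanced but typically non-uniform law, and its TV distance to uniform tends to $1$, not $0$, as $m\to\infty$. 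Wood's Fourier argument only gives the pointwise bound for vectors that are \emph{codes of positive-density distance}; the non-code vectors must be handled separately. What the paper actually uses (Lemma~\ref{lemmabalanced}) is the summed statement
\[\exp(-\exp(-cn))\le\sum_{\substack{\mathbf{g}\in H^n\\\langle\mathbf{g}\rangle=H}}\min_{\mathbf{f}}\mathbb{P}(\mathbf{A}\mathbf{g}=\mathbf{f})\le\sum_{\substack{\mathbf{g}\in H^n\\\langle\mathbf{g}\rangle=H}}\max_{\mathbf{f}}\mathbb{P}(\mathbf{A}\mathbf{g}=\mathbf{f})\le\exp(\exp(-cn)),\]
and to feed this into the block product it first bounds the $i$-th factor by $\max_{\mathbf{f}\in H_i^{n_i}}\mathbb{P}(\mathbf{A}_{i,i}\mathbf{g}_i=\mathbf{f})$ (resp.\ $\min$), which decouples that factor from $\mathbf{g}_{i-1}$ and from $\mathbf{B}$; only then does the sum over $\mathbf{g}_i$ factorize. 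Once you replace your pointwise claim by this $\max$/$\min$ decoupling plus the summed estimate, your outline becomes the paper's proof.

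A smaller slip: your count of non-chain profiles with $|I|=t$ is too optimistic. Between consecutive elements of $I$ (and before the first, and after the last) the profile is weakly increasing and can contain up to $\ell(G)$ strict inclusions each; with $t+1$ such segments the total number of positions where the profile changes is up to $(\ell(G)+1)(t+1)$, so the number of profiles is $O\bigl((|\Sg(G)|k)^{(\ell(G)+1)(t+1)}\bigr)$, not $|\Sg(G)|^{O(t)}k^{t+\ell(G)}$. This is the content of the paper's Claim~\ref{claim7}. The corrected bound still goes to $0$ under \ref{A7}, since $k^{C}(1-\varepsilon)^{n_\bullet}\to 0$ for every fixed $C$, but the exponent of $k$ must be allowed to grow linearly in $t$.
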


The expectations $\mathbb{E}|\Hom(\cok(\mathbf{C}^{(h)}),G)|$ are usually called the Hom-moments \break of $\cok(\mathbf{C}^{(h)})$. Note that the limiting rescaled moments do not depend on the finer details of our random matrix models such as the sizes of the blocks or the exact distribution of the entries. This is another manifestation of the universality phenomenon.

Combining Theorem~\ref{homlemma} with a special case of the \emph{rescaled moment method} of Nguyen and Van Peski~\cite{nguyen2024rank} stated in Theorem~\ref{rescaledmomentmethod}, one can describe the limiting fluctuations of the Sylow $p$-subgroup of $\cok(\mathbf{C}^{(h)})$, as we explain next.

Let $p$ be a prime. Any partition $\lambda=(\lambda_1,\lambda_2,\dots,\lambda_r)$ gives rise to a finite abelian $p$-group
\[G_{\lambda}=\bigoplus_{i=1}^r \mathbb{Z}/p^{\lambda_i}\mathbb{Z}.\]

This gives a one-to-one correspondence between partitions and finite abelian $p$-groups. Note that if $\lambda'$ is the conjugate of the partition $\lambda$, that is, $\lambda'_i=|\{j:\lambda_j\ge i\}|$, then $\lambda_i'$ also has a group theoretic meaning, namely,
\begin{equation}\label{conjugate}\lambda_i'=\text{rank}(p^{i-1} G_\lambda).\end{equation}
Let $|\lambda|=\sum_{i=1}^d \lambda_i$. Note that $\ell(G_{\lambda})=\ell(G_{\lambda'})=|\lambda|$, and $|G_{\lambda}|=|G_{\lambda'}|=p^{|\lambda|}$.

Thus, the Sylow $p$-subgroup of the cokernel of an integral random matrix, which is a finite abelian $p$-group can be also represented by a partition, or equivalently by the corresponding Young diagram. Thus, a random integral matrix determines a random Young diagram. Theorem~\ref{thmmain} below describes the joint fluctuations of the first $d$ columns of the Young diagram corresponding the Sylow $p$-group of $\cok(\mathbf{C}^{(h)})$. Before we can state this theorem, we need a few further notions.

Given a positive integer $d$, let
\begin{align*}\Sig_d&=\{(\lambda_1,\lambda_2,\dots,\lambda_d)\in \mathbb{Z}^d\,:\, \lambda_1\ge\lambda_2\cdots\ge \lambda_d\}\text{ and }\\
\Sig_d^{\ge 0}&=\{(\lambda_1,\lambda_2,\dots,\lambda_d)\in \mathbb{Z}^d\,:\, \lambda_1\ge\lambda_2\cdots\ge \lambda_d\ge 0\}.
\end{align*}

Note that $\Sig_d^{\ge 0}$ is just the set of partitions with at most $d$ parts.

For two vectors $v,w\in \mathbb{Z}^d$, their inner product is defined as $\langle v,w\rangle=\sum_{i=1}^d v_iw_i$.

For $\chi>0$, let $\mathcal{L}_{d,p^{-1},\chi}$ be a $\Sig_d\,$-valued random variable such that
\[\mathlarger{\mathbb{E} p^{\left\langle \mathcal{L}_{d,p^{-1},\chi}\,,\,\lambda \right\rangle}}=\frac{((p-1)\chi)^{|\lambda|}}{|\lambda|!}c(G_{\lambda'},|\lambda|)\qquad \text{ for all }\lambda\in\Sig_d^{\ge 0}.\]
It was proved by Nguyen and Van Peski that such a random variable exists and its distribution is uniquely determined~\cite{nguyen2024rank}. These random variables first appeared in the work of Van Peski on the reflecting Poisson sea~\cite{van2023local,van2023reflecting}.

For $x\in \mathbb{R}$, let $\lfloor x \rceil$ be the integer closest to $x$. (If this is not unique, it does not matter which one we choose.)
\begin{theorem}\label{thmmain}
 Let $p$ be a prime, and let $\mathcal{P}=\{p\}$. Consider a sequence of random matrices $\mathbf{C}^{(h)}$ satisfying the assumptions \ref{A1}--\ref{A7} above. Furthermore, assume that the fractional part $\{-\log_p(k(h))\}$ converges to $\zeta$. Let $\chi=p^{-\zeta}/(p-1)$. Let $\Gamma_h$ be the Sylow $p$-subgroup of $\cok(\mathbf{C}^{(h)})$. 
 
 Then, for any $d\ge 1$, 
 \[\left(\rang(p^{i-1} \Gamma_h)-\lfloor \log_p(k(h))+\zeta \rceil \right)_{i=1}^{d}\]
 converges to $\mathcal{L}_{d,p^{-1},\chi}$ in distribution as $h\to\infty$.
\end{theorem}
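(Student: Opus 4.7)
The plan is to deduce Theorem \ref{thmmain} directly from Theorem \ref{homlemma} by invoking the rescaled moment method (Theorem \ref{rescaledmomentmethod}). The only real task is to translate the Hom-moment statement of Theorem \ref{homlemma} into the exponential-moment form that characterises $\mathcal{L}_{d,p^{-1},\chi}$; once this is done, Theorem \ref{rescaledmomentmethod} finishes the job as a black box.

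Write $\Gamma_h=G_{\Lambda_h}$ for the (random) partition $\Lambda_h$ associated with the Sylow $p$-subgroup of $\cok(\mathbf{C}^{(h)})$. By identity \eqref{conjugate}, $\rang(p^{i-1}\Gamma_h)=\Lambda'_{h,i}$, so the object to analyse is the shifted vector $(\Lambda'_{h,i}-m(h))_{i=1}^d$, where $m(h):=\lfloor \log_p k(h)+\zeta\rceil$. Since $\{-\log_p k(h)\}\to\zeta$ and $m(h)$ is the nearest integer to $\log_p k(h)+\zeta$, one has $k(h)/p^{m(h)}\to p^{-\zeta}=(p-1)\chi$.

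The bridge to Theorem \ref{homlemma} is the standard formula $|\Hom(G_\alpha,G_\mu)|=p^{\sum_i\alpha'_i\mu'_i}$, obtained by decomposing $\Hom(G_\alpha,G_\mu)$ into cyclic Homs and using $\min(a,b)=\sum_{i\ge 1}\mathbf{1}[a\ge i]\mathbf{1}[b\ge i]$. Because $\mathcal{P}=\{p\}$ and $G_{\lambda'}$ is a $p$-group, $|\Hom(\cok(\mathbf{C}^{(h)}),G_{\lambda'})|=|\Hom(\Gamma_h,G_{\lambda'})|$. For $\lambda\in\Sig_d^{\ge 0}$, choose $\mu=\lambda'$ (so $\mu'=\lambda$ is supported on $\{1,\dots,d\}$) to get $|\Hom(\Gamma_h,G_{\lambda'})|=p^{\sum_{i=1}^d \Lambda'_{h,i}\lambda_i}$. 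Taking expectations, dividing by $p^{m(h)|\lambda|}$, and invoking Theorem \ref{homlemma} (noting $\ell(G_{\lambda'})=|\lambda|$) gives
\[\mathbb{E}\,p^{\sum_{i=1}^d(\Lambda'_{h,i}-m(h))\lambda_i}=\frac{\mathbb{E}|\Hom(\Gamma_h,G_{\lambda'})|}{k(h)^{|\lambda|}}\cdot\Bigl(\frac{k(h)}{p^{m(h)}}\Bigr)^{|\lambda|}\longrightarrow\frac{((p-1)\chi)^{|\lambda|}c(G_{\lambda'},|\lambda|)}{|\lambda|!},\]
and the right-hand side is precisely $\mathbb{E}\,p^{\langle\mathcal{L}_{d,p^{-1},\chi},\lambda\rangle}$ by the defining identity of $\mathcal{L}_{d,p^{-1},\chi}$.

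Feeding this convergence of rescaled exponential moments (valid for every $\lambda\in\Sig_d^{\ge 0}$) into Theorem \ref{rescaledmomentmethod} immediately yields the claimed convergence in distribution of $(\Lambda'_{h,i}-m(h))_{i=1}^d$ to $\mathcal{L}_{d,p^{-1},\chi}$. I expect essentially no substantive obstacle: the probabilistic content is entirely absorbed into Theorem \ref{homlemma}, and the moments-to-distribution upgrade is the black-box Theorem \ref{rescaledmomentmethod}. The only small verifications are the asymptotic $k(h)/p^{m(h)}\to p^{-\zeta}$ and checking that the side conditions of the rescaled moment method are satisfied under \ref{A1}--\ref{A7} --- most notably the divergence $m(h)\to\infty$, which follows from $k(h)\to\infty$ via \ref{A5}.
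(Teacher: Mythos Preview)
Your proposal is correct and follows the same route as the paper: verify hypothesis~(ii) of Theorem~\ref{rescaledmomentmethod} via Theorem~\ref{homlemma} (with $G=G_{\lambda'}$ and $\ell(G_{\lambda'})=|\lambda|$, using that $|\Hom(\cok(\mathbf{C}^{(h)}),G_{\lambda'})|=|\Hom(\Gamma_h,G_{\lambda'})|$ since $G_{\lambda'}$ is a $p$-group), then invoke the black box. Your additional translation into exponential moments via $|\Hom(G_\alpha,G_\mu)|=p^{\langle\alpha',\mu'\rangle}$ and the asymptotic $k(h)/p^{m(h)}\to p^{-\zeta}$ is correct but not needed here, since Theorem~\ref{rescaledmomentmethod} as stated already takes the Hom-moment limit $c(G_{\lambda'},|\lambda|)/|\lambda|!$ directly as its hypothesis~(ii); the exponential-moment reformulation is part of what that black box encapsulates.
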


By \eqref{conjugate} the theorem above describes the fluctuations of the first $d$ columns of the Young diagrams corresponding the Sylow $p$-group of $\cok(\mathbf{C}^{(h)})$. 

Comparing Theorem~\ref{thmmain} with the theorem of Nguyen and Van Peski~\cite{nguyen2024rank} below, we see that the cokernel of block lower triangular matrices lies in the same universality class as the cokernel of matrix products.

\begin{theorem}[Nguyen and Van Peski~\cite{nguyen2024rank}] \label{nguyenvanPeski}
Fix a prime $p$, and let $\xi$ be a $\mathbb{Z}$-valued random variable such that $\xi$ (mod p) is nonconstant. Let $k_n$ be a sequence of positive integers tending to infinity such that $k_n=O(e^{(\log n)^{1-\varepsilon}})$ for some $\varepsilon>0$. Let $\mathbf{M}_n$ be the product of $k_n$ iid $n\times n$ matrices with iid $\xi$ entries. Let $\Gamma_n$ be the Sylow $p$-group of $\cok(\mathbf{M}_n)$. 
Let $(n_j)$ be a subsequence such that the fractional part $\{-\log_p (k_{n_j})\}$ converges to $\zeta$. Let $\chi=p^{-\zeta}/(p-1)$. 

Then for all $d\ge 1$,
\[\left(\rang(p^{i-1}\Gamma_{n_j})-\lfloor\log_p(k_{n_j})+\zeta\rceil\right)_{i=1}^d\]
converges to $\mathcal{L}_{d,p^{-1},\chi}$ in distribution as $j\to\infty$.
\end{theorem}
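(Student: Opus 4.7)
The plan is to deduce Theorem~\ref{thmmain} from Theorem~\ref{homlemma} by invoking the rescaled moment method of Nguyen and Van Peski (Theorem~\ref{rescaledmomentmethod}), exactly as the text already hints. The rescaled moment method says that if $(X_h)$ is a sequence of $\Sig_d$-valued random variables such that for every $\lambda\in \Sig_d^{\ge 0}$ one has $\mathbb{E}\,p^{\langle X_h,\lambda\rangle}\to \mathbb{E}\,p^{\langle \mathcal{L}_{d,p^{-1},\chi},\lambda\rangle}$, then $X_h\to \mathcal{L}_{d,p^{-1},\chi}$ in distribution. Accordingly, setting $m(h):=\lfloor\log_p(k(h))+\zeta\rceil$ and $X_h:=(\rang(p^{i-1}\Gamma_h)-m(h))_{i=1}^d$, it suffices to verify this moment convergence.

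The bridge from the Hom-moments controlled by Theorem~\ref{homlemma} to $\mathbb{E}\,p^{\langle X_h,\lambda\rangle}$ is the standard identity
\[|\Hom(G_\mu,G_\nu)|=p^{\sum_{i\ge 1}\mu_i'\nu_i'}\]
for finite abelian $p$-groups. Taking $\nu=\lambda'$ (so that $\nu_i'=\lambda_i$) and using \eqref{conjugate}, this becomes
\[|\Hom(\Gamma_h,G_{\lambda'})|=p^{\sum_{i\ge 1}\rang(p^{i-1}\Gamma_h)\lambda_i},\]
and the sum truncates at $i=d$ whenever $\lambda\in \Sig_d^{\ge 0}$. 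Since $G_{\lambda'}$ is a $p$-group, $|\Hom(\cok(\mathbf{C}^{(h)}),G_{\lambda'})|=|\Hom(\Gamma_h,G_{\lambda'})|$, so Theorem~\ref{homlemma} (applied with $\mathcal{P}=\{p\}$ and $\ell(G_{\lambda'})=|\lambda|$) gives
\[\mathbb{E}\,p^{\sum_{i=1}^d \rang(p^{i-1}\Gamma_h)\lambda_i}\;\sim\; k(h)^{|\lambda|}\,\frac{c(G_{\lambda'},|\lambda|)}{|\lambda|!}.\]
The hypothesis $\{-\log_p(k(h))\}\to \zeta$ forces $k(h)/p^{m(h)}\to p^{-\zeta}=(p-1)\chi$, and therefore
\[\mathbb{E}\,p^{\langle X_h,\lambda\rangle}=p^{-m(h)|\lambda|}\,\mathbb{E}\,p^{\sum_{i=1}^d \rang(p^{i-1}\Gamma_h)\lambda_i}\;\longrightarrow\; \frac{((p-1)\chi)^{|\lambda|}}{|\lambda|!}\,c(G_{\lambda'},|\lambda|)=\mathbb{E}\,p^{\langle \mathcal{L}_{d,p^{-1},\chi},\lambda\rangle},\]
which is exactly what is needed.

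There is no substantive new difficulty in this deduction: the heavy lifting has already been absorbed into Theorem~\ref{homlemma} (the main technical theorem of the paper) and into the rescaled moment method (invoked as a black box from~\cite{nguyen2024rank}). The only point requiring a brief check is the convergence $k(h)/p^{m(h)}\to p^{-\zeta}$ in the presence of the rounding defining $m(h)$; this follows by writing $\log_p(k(h))=N(h)-\delta(h)$ with $N(h)\in\mathbb{Z}$ and $\delta(h)=\{-\log_p(k(h))\}\to\zeta$, so that $\log_p(k(h))+\zeta=N(h)+(\zeta-\delta(h))$ differs from the integer $N(h)$ by $o(1)$, forcing $m(h)=N(h)$ for all sufficiently large $h$ and giving $k(h)/p^{m(h)}=p^{-\delta(h)}\to p^{-\zeta}$ as claimed.
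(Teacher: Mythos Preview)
Your write-up is a proof of Theorem~\ref{thmmain}, not of Theorem~\ref{nguyenvanPeski}. You say so yourself in the first sentence (``deduce Theorem~\ref{thmmain}''), and you work throughout with the block-triangular matrices $\mathbf{C}^{(h)}$, the block count $k(h)$, and the Sylow $p$-groups $\Gamma_h$ from that theorem's setup. Theorem~\ref{nguyenvanPeski}, by contrast, concerns the cokernel of a matrix \emph{product} $\mathbf{M}_n=\mathbf{A}_1\cdots\mathbf{A}_{k_n}$. You cannot recover it by specializing Theorem~\ref{thmmain}: as the paper explicitly remarks just after stating Theorem~\ref{nguyenvanPeski}, the block-bidiagonal matrix in \eqref{Cconstruct} attached to a matrix product has deterministic identity blocks on the subdiagonal, which violates the $(\mathcal{P},\varepsilon)$-balancedness assumption~\ref{A3}, so Theorem~\ref{homlemma} does not apply to it.

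In any case, the paper does not prove Theorem~\ref{nguyenvanPeski}; it is quoted as a result of Nguyen and Van Peski. The paper's own contribution in that direction is Theorem~\ref{improvedtheorem}, proved in Section~4 by computing the Hom-moments of $\cok(\mathbf{A}_1\cdots\mathbf{A}_k)$ directly (not via $\mathbf{C}^{(h)}$) and then invoking Theorem~\ref{rescaledmomentmethod}. If, on the other hand, your intent was really to prove Theorem~\ref{thmmain}, then your argument is correct and is exactly the paper's deduction: Theorem~\ref{homlemma} supplies condition~(ii) of Theorem~\ref{rescaledmomentmethod}, and the conclusion follows. Your extra paragraph merely unpacks the Hom-to-exponential-moment translation and the rounding $m(h)=N(h)$, both of which are already absorbed into the statement of Theorem~\ref{rescaledmomentmethod}.
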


Although Theorem~\ref{thmmain} does not cover the matrices obtained from matrix products by the construction given in \eqref{Cconstruct}, with only small modifications, our methods can also be applied to such matrices. In fact, our proof provides an improvement on Theorem~\ref{nguyenvanPeski} by replacing the condition on $k_n$ with a weaker one.
\begin{theorem}\label{improvedtheorem}
Theorem~\ref{nguyenvanPeski} remains valid, if we replace the condition $k_n=O(e^{(\log n)^{1-\varepsilon}})$ with
\[\lim_{n\to\infty} \frac{\log(k_n)}{n}=0.\]
\end{theorem}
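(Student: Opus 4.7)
The plan is to reduce Theorem~\ref{improvedtheorem} to the Hom-moment calculation behind Theorem~\ref{thmmain}, passing through the block lower bidiagonal representation of matrix products from~(\ref{Cconstruct})-(\ref{cokidentity}). Let $\mathbf{C}$ be the $(k_n n)\times(k_n n)$ matrix of (\ref{Cconstruct}) with diagonal blocks $\mathbf{A}_1,\dots,\mathbf{A}_{k_n}$ and identity subdiagonals, so that $\cok(\mathbf{C})\cong \cok(\mathbf{M}_n)$. Because $\xi\pmod p$ is nonconstant, some $\varepsilon>0$ makes every $\mathbf{A}_i$ a $(\{p\},\varepsilon)$-balanced random matrix, and all block sizes equal $n$. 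Thus conditions \ref{A1}, \ref{A2}, \ref{A4}, \ref{A5}, \ref{A6} hold for this block decomposition, and \ref{A7} becomes exactly the new hypothesis $\log(k_n)/n\to 0$. The only thing missing relative to Theorem~\ref{thmmain} is~\ref{A3}: the subdiagonal blocks are the deterministic matrix $\mathbf{I}$ rather than random $(\{p\},\varepsilon)$-balanced blocks.

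The core step is to rerun the proof of Theorem~\ref{homlemma} under this small modification, yielding
\[\lim_{n\to\infty}\frac{\mathbb{E}\,|\Hom(\cok(\mathbf{M}_n),G)|}{k_n^{\ell(G)}}=\frac{c(G,\ell(G))}{\ell(G)!}\]
for every finite abelian $p$-group $G$. Parametrizing kernel vectors $v=(v_1,\dots,v_{k_n})\in(G^n)^{k_n}$ of $\mathbf{C}$ by $v_{k_n}$ via the identities $v_{i-1}=-\mathbf{A}_i v_i$ and $\mathbf{A}_1 v_1=0$ gives
\[\mathbb{E}|\Hom(\cok(\mathbf{C}),G)|=\sum_{v\in G^n}\mathbb{P}\bigl(\mathbf{A}_1\cdots\mathbf{A}_{k_n}v=0\bigr),\]
and I would stratify this sum by the descending chain of image subgroups $\Image(\mathbf{A}_{i+1}\cdots\mathbf{A}_{k_n}v)\subseteq G$ for $i=0,1,\dots,k_n-1$. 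The $(\{p\},\varepsilon)$-balance of the diagonal blocks supplies concentration estimates controlling the probability that the image strictly shrinks at any given step; the dominant contribution comes from chains of maximal length $\ell(G)$, yielding $\binom{k_n}{\ell(G)}c(G,\ell(G))\sim k_n^{\ell(G)}c(G,\ell(G))/\ell(G)!$. Once this asymptotic is in hand, Theorem~\ref{improvedtheorem} follows by applying the rescaled moment method (Theorem~\ref{rescaledmomentmethod}) exactly as in the proof of Theorem~\ref{thmmain}.

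The hard part will be checking that the shrinkage/concentration machinery of Theorem~\ref{homlemma} survives the replacement of random $(\{p\},\varepsilon)$-balanced subdiagonals by the identity and, more importantly, does so uniformly in the regime $\log k_n=o(n)$, which is substantially more permissive than $k_n=O(\exp((\log n)^{1-\varepsilon}))$. Replacing random subdiagonals by $\mathbf{I}$ in fact simplifies the conditional-independence structure, since each $v_i$ is now a deterministic function of $v_{k_n}$ and $\mathbf{A}_{i+1},\dots,\mathbf{A}_{k_n}$, so the combinatorial skeleton of the original argument should carry over unchanged; the delicate point is to propagate the error estimates carefully enough to cover the full range $\log k_n=o(n)$, which is already built into what one must prove for Theorem~\ref{homlemma} under (A.7).
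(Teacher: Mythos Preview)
Your proposal is correct and follows essentially the same approach as the paper: compute the rescaled Hom-moments of $\cok(\mathbf{M}_n)$ by stratifying $\sum_{v\in G^n}\mathbb{P}(\mathbf{A}_1\cdots\mathbf{A}_{k_n}v=0)$ according to the chain of subgroups $\langle\mathbf{A}_{i+1}\cdots\mathbf{A}_{k_n}v\rangle$, use Lemma~\ref{lemmabalanced} to show each chain contributes $\exp(\pm k\exp(-cn))=1+o(1)$ under $\log k_n=o(n)$, count chains, and apply Theorem~\ref{rescaledmomentmethod}. The paper works directly with the product rather than detouring through the block bidiagonal matrix $\mathbf{C}$, and observes (as you do) that the argument is actually simpler than that of Theorem~\ref{homlemma} because the chain is automatically nested, so the $w(\mathbf{g})\ge 1$ case never arises.
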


\bigskip

\textbf{Acknowledgments:} The author is grateful to the anonymous referees for their useful suggestions. The author was supported by the KKP 139502 project and  Dynasnet European Research Council Synergy project -- grant number ERC-2018-SYG 810115.
\section{Preliminaries}

The following lemma is well known, see for example \cite[Proposition 5.2.]{meszaros2020distribution}.
\begin{lemma}\label{lemmacokmoment}
Let $\mathbf{M}$ be an $n\times n$ random matrix over $\mathbb{Z}$, and let $G$ be a finite abelian group. Then
\[\mathbb{E}|\Hom(\cok(\mathbf{M}),G)|=\sum_{\mathbf{g}\in G^n} \mathbb{P}(\mathbf{M}\mathbf{g}=0).\]

\end{lemma}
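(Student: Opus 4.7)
The plan is to deduce the identity pointwise for each fixed realization of $\mathbf{M}$ and then take expectation. So first I would fix a deterministic integer matrix $M$ and reduce the claim to the combinatorial identity
\[|\Hom(\cok(M),G)|=|\{\mathbf{g}\in G^n:M\mathbf{g}=0\}|.\]
Once this is established, the lemma follows by writing $|\{\mathbf{g}\in G^n:M\mathbf{g}=0\}|=\sum_{\mathbf{g}\in G^n}\mathbbm{1}[M\mathbf{g}=0]$, averaging over the randomness of $\mathbf{M}$, and swapping the expectation with the finite sum over $G^n$ by linearity.

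To prove the deterministic identity I would use the universal property of quotients. Homomorphisms $\cok(M)=\mathbb{Z}^n/\RowS(M)\to G$ are in bijection with homomorphisms $\varphi:\mathbb{Z}^n\to G$ that vanish on $\RowS(M)$. Since $\mathbb{Z}^n$ is free, such a $\varphi$ is determined by the tuple $\mathbf{g}=(\varphi(e_1),\dots,\varphi(e_n))\in G^n$, giving a bijection between $\Hom(\mathbb{Z}^n,G)$ and $G^n$. I would then identify the condition of vanishing on $\RowS(M)$: writing the $i$-th row of $M$ as $(M_{i,1},\dots,M_{i,n})$, we have $\varphi$ of that row equal to $\sum_{j}M_{i,j}g_j$, which is the $i$-th entry of the column vector $M\mathbf{g}\in G^n$. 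Hence $\varphi$ kills every row of $M$ if and only if $M\mathbf{g}=0$ in $G^n$, which gives the claimed count.

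There is essentially no obstacle here; the only subtlety worth double-checking is the convention that $G^n$ is viewed as a left $\mathbb{Z}$-module and that $M\mathbf{g}$ makes sense for $\mathbf{g}\in G^n$ (the matrix acts by integer linear combinations of group elements, which is well defined for any abelian group $G$). With that convention fixed, combining the two displays above yields the stated formula.
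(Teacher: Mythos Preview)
Your argument is correct and is the standard proof of this well-known identity. The paper does not supply its own proof but simply cites a reference, so there is nothing to compare against; your write-up is exactly what one would expect to find behind that citation.
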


For a vector $\mathbf{g}\in G^n$, let $\langle\mathbf{g} \rangle$ be the subgroup of $G$ generated by the components of $\mathbf{g}$.

\begin{lemma}\label{lemmabalanced}
Let $G$ be a finite abelian group, let $\mathcal{P}$ be the set of prime divisors of $|G|$. There is a $c>0$ such that for all large enough $n$, if $\mathbf{M}$ is a $(\mathcal{P},\varepsilon)$-balanced $n\times n$ random matrix and $G_0$ is a subgroup of $G$, then
\[\exp(-\exp(-cn))\le \sum_{\substack{\mathbf{g}\in G_0^n\\\langle\mathbf{g}\rangle=G_0}} \min_{\mathbf{f}\in G_0^n} \mathbb{P}(\mathbf{M}\mathbf{g}=\mathbf{f})\le \sum_{\substack{\mathbf{g}\in G_0^n\\\langle\mathbf{g}\rangle=G_0}} \max_{\mathbf{f}\in G_0^n} \mathbb{P}(\mathbf{M}\mathbf{g}=\mathbf{f})\le \exp(\exp(-cn)).\]

\end{lemma}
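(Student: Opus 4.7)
The plan is to Fourier-analyze the per-row probability $P_\mathbf{g}(y):=\mathbb{P}(\mathbf{m}\cdot\mathbf{g}=y)$ (for a generic row $\mathbf{m}$ of $\mathbf{M}$) and to control the sum over generating tuples by stratifying $\mathbf{g}$ according to how uniformly it is spread across the subgroups of $G_0$. Since the $n$ rows of $\mathbf{M}$ are i.i.d., for any fixed $\mathbf{g}$ one has $\max_\mathbf{f}\mathbb{P}(\mathbf{M}\mathbf{g}=\mathbf{f})=(\max_y P_\mathbf{g}(y))^n$ and analogously for the minimum, so the lemma reduces to sandwiching both $\sum_{\langle\mathbf{g}\rangle=G_0}(\min_y P_\mathbf{g}(y))^n$ and $\sum_{\langle\mathbf{g}\rangle=G_0}(\max_y P_\mathbf{g}(y))^n$ between $\exp(-\exp(-cn))$ and $\exp(\exp(-cn))$.

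By Fourier inversion on $G_0$,
\[P_\mathbf{g}(y)=\frac{1}{|G_0|}\sum_{\chi\in\widehat{G_0}}\overline{\chi(y)}\prod_{i=1}^n\widehat{X}(\chi(g_i)),\qquad\widehat{X}(z):=\mathbb{E}[z^X],\]
and the $(\mathcal{P},\varepsilon)$-balanced hypothesis furnishes a constant $\delta=\delta(\varepsilon,G_0)>0$ with $|\widehat{X}(\chi(g_i))|\leq 1-\delta$ whenever $\chi(g_i)\neq 1$ (pass to a prime divisor $p$ of the order of $\chi(g_i)$ and invoke the standard univariate Fourier bound for a $(p,\varepsilon)$-balanced variable at a nontrivial $p$-th root of unity). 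Setting $\nu(\mathbf{g}):=\min_{\chi\neq 1}|\{i:\chi(g_i)\neq 1\}|$ (a minimum attained on prime-order characters), I would split the generating $\mathbf{g}$'s into \emph{good} ($\nu(\mathbf{g})\geq\alpha n$) and \emph{bad} ($1\leq\nu(\mathbf{g})<\alpha n$), for a small $\alpha>0$ chosen later. For good $\mathbf{g}$ the Fourier expansion gives $P_\mathbf{g}(y)=|G_0|^{-1}+O((1-\delta)^{\alpha n})$ uniformly in $y$, so $(|G_0|P_\mathbf{g}(y))^n\in\exp(\pm\exp(-c_1n))$ for any $c_1<\alpha|\log(1-\delta)|$. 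A Chernoff bound on the indicator of $g_i\in\ker\chi$ (success probability $1/p\leq 1/2$ under uniform $g_i$), union-bounded over the $O_{G_0}(1)$ prime-order characters, yields $|\{\text{good}\}|/|G_0|^n=1-O(\exp(-c_0n))$, which places the good contribution in the required range.

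The main obstacle is the bad contribution to the upper bound; the lower bound is immediate because bad terms are non-negative and can be dropped. For a bad $\mathbf{g}$, fix a maximal subgroup $H=\ker\chi$ (with $p:=[G_0:H]$ prime) realizing the minimum in $\nu(\mathbf{g})$, and set $k:=|\{i:g_i\notin H\}|=\nu(\mathbf{g})\in[1,\alpha n)$. Projecting the row product to $G_0/H\cong\mathbb{F}_p$ and applying Fourier on $\mathbb{F}_p$ gives
\[\max_y P_\mathbf{g}(y)\leq\max_{\bar y}\mathbb{P}(\pi(\mathbf{m}\cdot\mathbf{g})=\bar y)\leq\frac{1}{p}+(1-\delta)^k.\]
Combined with the count $\binom{n}{k}|H|^{n-k}(|G_0|-|H|)^k$ of bad tuples with parameters $(H,k)$, a geometric summation over $k$ (once $(1-\delta)^k\leq 1/p$) and a union bound over the $O_{G_0}(1)$ maximal subgroups $H$ yields a total bad contribution of order $\exp(-c_2n)$, provided $\alpha$ is small enough. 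The hardest part is keeping this estimate tight when $|G_0|$ is large relative to $p$: the crude projection bound must then be sharpened to $\max_y P_\mathbf{g}(y)\leq|G_0|^{-1}(1+p(1-\delta)^k)$ by exploiting that $\mathbf{m}_H\cdot\mathbf{g}_H$ is itself close to uniform on $H$ for typical $\mathbf{g}_H\in H^{n-k}$; this can be arranged by an induction on $|G_0|$ in which the lemma is applied inside each maximal subgroup.
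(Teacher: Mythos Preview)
Your decomposition is essentially Wood's (and the paper's): your ``good'' vectors with $\nu(\mathbf g)\ge\alpha n$ are precisely the \emph{codes of distance $\alpha n$} in the paper's Appendix, and your Fourier treatment of them matches Lemma~\ref{codes}. One cosmetic slip: the hypothesis only gives independent $(\mathcal P,\varepsilon)$-balanced \emph{entries}, not i.i.d.\ rows, so one should write $\max_{\mathbf f}\mathbb P(\mathbf{Mg}=\mathbf f)=\prod_i\max_y P^{(i)}_{\mathbf g}(y)$ with a row-dependent $P^{(i)}$; since your Fourier bounds are uniform over balanced entry laws this costs nothing.

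The genuine gap is in the bad contribution. First, your sharpened bound should carry the factor $p-1$, not $p$: Fourier on $G_0/H\cong\mathbb F_p$ has $p-1$ nontrivial characters, so $\max_{\bar y}\mathbb P(\pi(\mathbf m\cdot\mathbf g)=\bar y)\le\frac{1}{p}\bigl(1+(p-1)(1-\delta)^k\bigr)$, hence $\max_y P_{\mathbf g}(y)\le |G_0|^{-1}\bigl(1+(p-1)(1-\delta)^k\bigr)$ for typical $\mathbf g_H$. The distinction is not innocent: with your factor $p$ and $G_0=\mathbb Z/4$, $H=2\mathbb Z/4$, $k=1$, the stratum contributes $\binom{n}{1}(1/2)^{n-1}(1/2)\cdot(1+2(1-\delta))^n=n(\tfrac32-\delta)^n\to\infty$, whereas with $p-1$ it gives $n(1-\tfrac{\delta}{2})^n\to 0$.

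Second, and more substantially, your induction is underspecified. Invoking ``the lemma inside each maximal subgroup'' yields control only of the \emph{sum} $\sum_{\langle\mathbf g_H\rangle=H_0}(\max_h P_{\mathbf g_H}(h))^{n-k}$, whereas what you feed back into the bound on $P_{\mathbf g}$ is the \emph{pointwise} estimate $\max_z P_{\mathbf g_H}(z)\approx |H|^{-1}$ for each fixed $\mathbf g_H$. To handle those $\mathbf g$ for which $\mathbf g_H$ is itself atypical you must iterate the good/bad split along a chain $G_0\supsetneq H_1\supsetneq H_2\supsetneq\cdots$, tracking how many coordinates escape each $H_j$. Packaging this iteration in one shot is exactly the role of the paper's $\delta$-\emph{depth} (Lemmas~\ref{countdepth} and~\ref{probdepthestimate}): for $\mathbf g$ of depth $D>1$ one obtains directly $\max_{\mathbf f}\mathbb P(\mathbf{Mg}=\mathbf f)\le K e^{-\varepsilon n}D^n/|G_0|^n$, and the matching count $\lesssim\binom{n}{\ell(D)\delta n}|G_0|^n D^{-n+\ell(D)\delta n}$ closes the estimate. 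Your inductive scheme can be completed along these lines, but as written the bad-vector analysis is not yet a proof.
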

\begin{proof}
Wood~\cite[Theorem 2.9]{wood2019random} proved that there are $c_0,K$ depending only on $G$ and $\varepsilon$ such that for any $(\mathcal{P},\varepsilon)$-balanced $n\times n$ random  matrix $\mathbf{M}$, we have
 \[1-K\exp(-c_0 n)\le \sum_{\substack{\mathbf{g}\in G^n\\\langle\mathbf{g}\rangle=G}} \mathbb{P}(\mathbf{M}\mathbf{g}=0)\le 1+K\exp(-c_0 n).\]
 One can see that the same proof actually gives the stronger statement that
 \begin{equation}\label{generalWood}1-K\exp(-c_0 n)\le \sum_{\substack{\mathbf{g}\in G^n\\\langle\mathbf{g}\rangle=G}} \min_{\mathbf{f}\in G^n} \mathbb{P}(\mathbf{M}\mathbf{g}=\mathbf{f})\le \sum_{\substack{\mathbf{g}\in G^n\\\langle\mathbf{g}\rangle=G}} \max_{\mathbf{f}\in G^n} \mathbb{P}(\mathbf{M}\mathbf{g}=\mathbf{f})\le 1+K\exp(-c_0 n).\end{equation}
 Indeed, one only needs to notice that the upper bound in \cite[Lemma 2.7]{wood2019random} on $\mathbb{P}(FX=0)$ also an upper bound on $\mathbb{P}(FX=g)$ for all $g\in G$. See \cite[Lemma 3.7]{nguyen2024rank}, where a detailed proof of this statement is written out. For the convenience of the reader, we repeat the argument of Wood~\cite{wood2019random} with the necessary changes in Section \ref{appendix2} of the Appendix.
 
 For large enough $n$, we have
 \[\exp(-2K\exp(-c_0n))\le 1-K\exp(-c_0 n)\le 1+K\exp(-c_0 n)\le \exp(2K\exp(-c_0n)).\]
 Thus, if we choose $0<c<c_0$, then for any large enough $n$, we have
 \[\exp(-\exp(-cn))\le \sum_{\substack{\mathbf{g}\in G^n\\\langle\mathbf{g}\rangle=G}} \min_{\mathbf{f}\in G^n} \mathbb{P}(\mathbf{M}\mathbf{g}=\mathbf{f})\le \sum_{\substack{\mathbf{g}\in G^n\\\langle\mathbf{g}\rangle=G}} \max_{\mathbf{f}\in G^n} \mathbb{P}(\mathbf{M}\mathbf{g}=\mathbf{f})\le \exp(\exp(-cn)).\]
The same statement is true for any subgroup $G_0$ of $G$ in place of $G$ with a possibly different constant~$c$. Taking the minimum of these constants, the statement follows. 
\end{proof}
The proof of the next lemma is straightforward.
\begin{lemma}\label{lemma6}
Let $G$ be a finite abelian group, let $\mathcal{P}$ be the set of prime divisors of $|G|$. Let $\mathbf{M}$ be a $(\mathcal{P},\varepsilon)$-balanced $m\times n$ random matrix and $G_0$ be a subgroup of $G$. Furthermore, let $\mathbf{g}$ be a deterministic vector in $G^n$ such that $\langle \mathbf{g}\rangle\not\subseteq G_0$. Then for any $\mathbf{f}\in G^m$, we have
\[\mathbb{P}(\mathbf{f}+\mathbf{M}\mathbf{g}\in G_0^m)\le (1-\varepsilon)^m.\]
\end{lemma}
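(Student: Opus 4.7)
The plan is to reduce the statement row-by-row, exploiting that the rows of $\mathbf{M}$ are independent (so that $\mathbf{M}\mathbf{g}$ has independent coordinates) and then to apply the balancedness hypothesis to a single well-chosen entry in each row. First I would observe that
\[\mathbb{P}(\mathbf{f}+\mathbf{M}\mathbf{g}\in G_0^m)=\prod_{i=1}^m \mathbb{P}\bigl(f_i+(\mathbf{M}\mathbf{g})_i\in G_0\bigr),\]
since the rows are independent, so it suffices to show that each factor is at most $1-\varepsilon$.

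Fix a row $i$. Because $\langle\mathbf{g}\rangle\not\subseteq G_0$, there is some index $j^\ast$ with $g_{j^\ast}\notin G_0$. I would condition on all entries $M_{i,j}$ for $j\ne j^\ast$: after conditioning, $f_i+\sum_{j\ne j^\ast} M_{i,j}g_j$ becomes a fixed element $a\in G$, and the event $f_i+(\mathbf{M}\mathbf{g})_i\in G_0$ becomes the event $M_{i,j^\ast}g_{j^\ast}\in G_0-a$. So it remains to bound $\mathbb{P}(Xg^\ast\in S)$, where $g^\ast=g_{j^\ast}\notin G_0$, $S$ is a coset of $G_0$, and $X$ is a $(\mathcal{P},\varepsilon)$-balanced integer random variable.

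The core algebraic step is to identify a prime $p\in\mathcal{P}$ witnessing the balancedness. Consider the subgroup $H=\{k\in\mathbb{Z}:kg^\ast\in G_0\}\subseteq\mathbb{Z}$; since $g^\ast$ has finite order dividing $|G|$, $H$ has finite index, so $H=d\mathbb{Z}$ for some $d\ge 1$, and since $g^\ast\notin G_0$ we have $1\notin H$, so $d\ge 2$. Pick any prime $p\mid d$; then $p$ divides the order of $g^\ast$, which divides $|G|$, so $p\in\mathcal{P}$. The set $\{k\in\mathbb{Z}:kg^\ast\in S\}$ is either empty or a single coset of $d\mathbb{Z}$, hence is contained in a single residue class $r+p\mathbb{Z}$. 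Therefore
\[\mathbb{P}(Xg^\ast\in S)\le\mathbb{P}(X\equiv r\pmod p)\le 1-\varepsilon\]
by $(\mathcal{P},\varepsilon)$-balancedness. Combining with the row-product expansion gives the claimed bound.

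The only nontrivial step is the extraction of the prime $p\in\mathcal{P}$ from the condition $g^\ast\notin G_0$; everything else (independence across rows, conditioning within a row, translating a coset-avoidance to a mod-$p$ statement) is routine. No upper bound on the density of $X$ at nonzero residues beyond $(p,\varepsilon)$-balancedness is needed, and the argument does not require the entries of $\mathbf{M}$ to be identically distributed.
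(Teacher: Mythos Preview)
Your proof is correct. The paper does not write out a proof of this lemma at all --- it simply states ``The proof of the next lemma is straightforward'' --- and your argument is precisely the kind of routine verification the authors have in mind: factor over independent rows, condition on all but one entry in a row, and reduce to a single residue-class bound. Your justification that the prime $p\mid d$ lies in $\mathcal{P}$ (via $d\mid\operatorname{ord}(g^\ast)\mid |G|$) is the only point requiring a moment's thought, and you handle it correctly.
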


Theorem~\ref{thmmain} follows from Theorem~\ref{homlemma} by using the following theorem of Nguyen and Van Peski~\cite{nguyen2024rank}.

\begin{theorem}{\normalfont(Nguyen and Van Peski~\cite[Theorem 1.2 and Proposition 5.3.]{nguyen2024rank})} \label{rescaledmomentmethod}Fix a prime $p$ and a positive integer $d$. Let $(\Gamma_h)_{h\ge 1}$ be a sequence of random
finitely-generated abelian $p$-groups and $(k_h)_{h\ge 1}$ a sequence of real numbers such that the following holds:
\begin{enumerate}[label=(\roman*)]
\item The fractional part $\{-\log_p(k_h)\}$ converges to $\zeta$.
\item For all $\lambda\in \Sig_d^{\ge 0}$, we have
\[\lim_{h\to\infty} \mathbb{E}\
\frac{|\Hom(\Gamma_h,G_{\lambda'})|}{k_h^{|\lambda|}}=\frac{c(G_{\lambda'},|\lambda|)}{|\lambda|!}.\]

\end{enumerate}

Then
\[\left(\rang(p^{i-1}\Gamma_h)-\lfloor\log_p(k_{h})+\zeta\rceil\right)_{i=1}^d\]
converges to $\mathcal{L}_{d,p^{-1},\chi}$ in distribution as $h\to\infty$, where $\chi=\frac{p^{-\zeta}}{p-1}$.
 
\end{theorem}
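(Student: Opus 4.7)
The plan is to use the moment method. First I would rewrite the Hom-moment as an exponential moment of the rank vector, second I would rescale and match against the target moments of $\mathcal{L}_{d,p^{-1},\chi}$, and third I would upgrade moment convergence to convergence in distribution via tightness and uniqueness — this last step being the substantive one.

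For the algebraic/calculational part, I would start from the standard identity
\[|\Hom(H,G_\nu)|=p^{\sum_i \rang(p^{i-1}H)\,\nu'_i},\]
valid for any finite abelian $p$-group $H$ and any partition $\nu$. Substituting $\nu=\lambda'$ (so $\nu'=\lambda$) and observing that $\lambda_i=0$ for $i>d$ when $\lambda\in\Sig_d^{\ge 0}$, this gives $|\Hom(\Gamma_h,G_{\lambda'})|=p^{\langle R_h,\lambda\rangle}$, where $R_h=(\rang(p^{i-1}\Gamma_h))_{i=1}^d$. Let $r_h=\lfloor\log_p(k_h)+\zeta\rceil$ and set $X_{h,i}=R_{h,i}-r_h$. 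Hypothesis (i) implies $k_h/p^{r_h}\to p^{-\zeta}$, so hypothesis (ii) gives
\[\mathbb{E}\,p^{\langle X_h,\lambda\rangle}=\frac{\mathbb{E}|\Hom(\Gamma_h,G_{\lambda'})|}{k_h^{|\lambda|}}\cdot\Bigl(\frac{k_h}{p^{r_h}}\Bigr)^{|\lambda|}\longrightarrow\frac{c(G_{\lambda'},|\lambda|)}{|\lambda|!}\,p^{-\zeta|\lambda|},\]
and the right-hand side is precisely $\mathbb{E}\,p^{\langle\mathcal{L}_{d,p^{-1},\chi},\lambda\rangle}$, because $((p-1)\chi)^{|\lambda|}=p^{-\zeta|\lambda|}$.

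The main obstacle is the passage from convergence of these moments to convergence in distribution. The moments are controlled only for $\lambda\in\Sig_d^{\ge 0}$, i.e.\ in the forward cone; they govern how large $X_h$ can be but a priori do not prevent its mass from drifting to $-\infty$ (the crude bound $R_{h,i}\ge 0$ only yields $X_{h,i}\ge -r_h\to-\infty$). My plan would be to establish tightness of $(X_h)$ in $\Sig_d$ by bounding upper tails through the moment at $\lambda=(1,0,\dots,0)$ via Markov, and lower tails by examining the behavior of the moments at $\lambda=(N,N,\dots,N)$ as $N\to\infty$ together with the chain monotonicity $X_{h,1}\ge\cdots\ge X_{h,d}$; then along any subsequential weak limit $X_\infty$, the exponential moments match those of $\mathcal{L}_{d,p^{-1},\chi}$ on $\Sig_d^{\ge 0}$, and a uniqueness lemma — that the family $\bigl(\mathbb{E}\,p^{\langle Y,\lambda\rangle}\bigr)_{\lambda\in\Sig_d^{\ge 0}}$ determines the law of a $\Sig_d$-valued $Y$ — forces $X_\infty\stackrel{d}{=}\mathcal{L}_{d,p^{-1},\chi}$, giving convergence in distribution. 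This one-sided moment uniqueness together with the quantitative tightness is the genuine content (Proposition~5.3 of \cite{nguyen2024rank}) that one must either adapt or invoke as a black box.
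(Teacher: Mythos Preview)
The paper does not prove this theorem at all: it is stated as a black-box citation of Nguyen and Van Peski~\cite[Theorem~1.2 and Proposition~5.3]{nguyen2024rank}, with no argument given. Your proposal goes further than the paper by unpacking the structure of that cited result --- the rewriting of $|\Hom(\Gamma_h,G_{\lambda'})|$ as $p^{\langle R_h,\lambda\rangle}$, the rescaling via hypothesis~(i), and the identification of the limiting moments with those defining $\mathcal{L}_{d,p^{-1},\chi}$ are all correct --- and you rightly flag that the real work (tightness from one-sided exponential moments, and uniqueness of a $\Sig_d$-valued law from its moments over $\Sig_d^{\ge 0}$) is precisely the content of \cite[Proposition~5.3]{nguyen2024rank}, which you propose to invoke. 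So your approach is consistent with, and more detailed than, the paper's treatment; there is nothing to compare against because the paper offers no proof of its own.
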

\section{The proof of Theorem~\ref{homlemma}}

\subsection{The outline of the proof}

For simplicity of notation, we drop the index $h$ whenever it does not cause any confusion.

Let $\mathbf{g}\in G^n$. We can write $\mathbf{g}$ as the concatenation of the vectors $\mathbf{g}_1,\mathbf{g}_2,\dots,\mathbf{g}_k$, where $\mathbf{g}_i\in G^{n_i}$. 

We define
\begin{align*}
    w(\mathbf{g})&=\left|\left\{i\in\{2,3,\dots,k\}\,:\,\langle \mathbf{g}_{i-1}\rangle\not\subseteq \langle\mathbf{g}_i\rangle\right \}\right|,\\
    t(\mathbf{g})&=\left|\left\{i\in\{1,2,\dots,k\}\,:\,\langle \mathbf{g}_{i-1}\rangle\subsetneq \langle\mathbf{g}_i\rangle\right \}\right|,
\end{align*}
with the convention that $\langle \mathbf{g}_0\rangle=\{0\}$.

By Lemma~\ref{lemmacokmoment}, we need to find the asymptotics of the sum
\begin{equation}\label{keysum}\sum_{\mathbf{g}\in G^n} \mathbb{P}(\mathbf{C}\mathbf{g}=0).\end{equation}

We split the sum according to the values of $w(\mathbf{g})$ and $t(\mathbf{g})$, and prove the following statements:
\begin{itemize}
    \item The vectors $\mathbf{g}\in G^n$ such that $w(\mathbf{g})>0$ have a negligible contribution to the sum in~\eqref{keysum}. See equation \eqref{wge1}.
    \item If $w(\mathbf{g})=0$, then $t(\mathbf{g})$ can take the values $0,1,\dots,\ell(G)$. Given an $i\in \{0,1,\dots,\ell(G)\}$, the total contribution of the vectors $\mathbf{g}\in G^n$ such that $w(\mathbf{g})=0$ and $t(\mathbf{g})=i$ to the sum is
    \[(1+o(1))c(G,i)\frac{k^i}{i!},\]
    see equation~\eqref{w0ti}.
    Here, the choice of $i=\ell(G)$ gives us the dominant contribution.
\end{itemize}
Once we prove these statements, Theorem~\ref{homlemma} follows easily.

\subsection{Details of the proof}

Let $\mathbf{B}'$ be a deterministic matrix in the range of $\mathbf{B}$. Then by \ref{A1}, \ref{A2}, \ref{A3} and \ref{A4}, we have
\begin{equation}\label{Pprod}
 \mathbb{P}\left(\mathbf{C}\mathbf{g}=0\,|\,\mathbf{B}=\mathbf{B}'\right)
 =\mathbb{P}\left(\mathbf{A}_{1,1}\mathbf{g}_1=0\right)\prod_{i=2}^k \mathbb{P}\left(\mathbf{A}_{i,i}\mathbf{g}_i=-\mathbf{A}_{i,i-1}\mathbf{g}_{i-1}-\sum_{j=1}^{i-2}\mathbf{B}'_{i,j}\mathbf{g}_j\right).
\end{equation}

Note that 
\begin{align}\label{Pcond}\mathbb{P}&\left(\mathbf{A}_{i,i}\mathbf{g}_i=-\mathbf{A}_{i,i-1}\mathbf{g}_{i-1}-\sum_{j=1}^{i-2}\mathbf{B}'_{i,j}\mathbf{g}_j\right)\\&=\mathbb{P}\left(\mathbf{A}_{i,i-1}\mathbf{g}_{i-1}+\sum_{j=1}^{i-2}\mathbf{B}'_{i,j}\mathbf{g}_j\in \langle \mathbf{g}_i\rangle^{n_i}\right)\nonumber\\&\qquad\cdot\mathbb{P}\left(\mathbf{A}_{i,i}\mathbf{g}_i=-\mathbf{A}_{i,i-1}\mathbf{g}_{i-1}-\sum_{j=1}^{i-2}\mathbf{B}'_{i,j}\mathbf{g}_j\,\Big|\, \mathbf{A}_{i,i-1}\mathbf{g}_{i-1}+\sum_{j=1}^{i-2}\mathbf{B}'_{i,j}\mathbf{g}_j\in \langle \mathbf{g}_i\rangle^{n_i}\right)\nonumber.\end{align}

If $\langle \mathbf{g}_{i-1}\rangle\not\subseteq \langle\mathbf{g}_i\rangle$, then by Lemma~\ref{lemma6}, we have the estimate
\[\mathbb{P}\left(\mathbf{A}_{i,i-1}\mathbf{g}_{i-1}+\sum_{j=1}^{i-2}\mathbf{B}'_{i,j}\mathbf{g}_j\in \langle \mathbf{g}_i\rangle^{n_i}\right)\le (1-\varepsilon)^{n_i}\le (1-\varepsilon)^{n_\bullet}.\]
If $\langle \mathbf{g}_{i-1}\rangle\subseteq \langle\mathbf{g}_i\rangle$, then trivially,
\[\mathbb{P}\left(\mathbf{A}_{i,i-1}\mathbf{g}_{i-1}+\sum_{j=1}^{i-2}\mathbf{B}'_{i,j}\mathbf{g}_j\in \langle \mathbf{g}_i\rangle^{n_i}\right)\le 1.\]
By \ref{A3}, the matrix $\mathbf{A}_{i,i}$ is independent from $\mathbf{A}_{i,i-1}$. Therefore,
\begin{multline*}\mathbb{P}\left(\mathbf{A}_{i,i}\mathbf{g}_i=-\mathbf{A}_{i,i-1}\mathbf{g}_{i-1}-\sum_{j=1}^{i-2}\mathbf{B}'_{i,j}\mathbf{g}_j\,\Big|\, \mathbf{A}_{i,i-1}\mathbf{g}_{i-1}+\sum_{j=1}^{i-2}\mathbf{B}'_{i,j}\mathbf{g}_j\in \langle \mathbf{g}_i\rangle^{n_i}\right)\\\le \max_{\mathbf{f}\in \langle \mathbf{g}_i\rangle^{n_i}}\mathbb{P}(\mathbf{A}_{i,i}\mathbf{g}_i=\mathbf{f}).
\end{multline*}

Thus,
\[ \mathbb{P}(\mathbf{C}\mathbf{g}=0\,|\,\mathbf{B}=\mathbf{B}')\le (1-\varepsilon)^{w(\mathbf{g})n_\bullet} \prod_{i=1}^n \max_{\mathbf{f}\in \langle \mathbf{g}_i\rangle^{n_i}}\mathbb{P}(\mathbf{A}_{i,i}\mathbf{g}_i=\mathbf{f}). \]

Therefore, by the law of total probability,
\begin{equation} \label{upperb}\mathbb{P}(\mathbf{C}\mathbf{g}=0)\le (1-\varepsilon)^{w(\mathbf{g})n_\bullet} \prod_{i=1}^n \max_{\mathbf{f}\in \langle \mathbf{g}_i\rangle^{n_i}}\mathbb{P}(\mathbf{A}_{i,i}\mathbf{g}_i=\mathbf{f}). \end{equation}

Let $\Sg(G)$ be the set of subgroups of $G$. For $\mathbf{g}\in G^n$, we introduce the notation \[\tyg(\mathbf{g})=(\langle \mathbf{g}_1\rangle,\langle \mathbf{g}_2\rangle,\dots,\langle \mathbf{g}_k\rangle)\in \Sg(G)^k.\]

Given a sequence $\mathbf{H}=(H_1,H_2,\dots,H_k)\in \Sg(G)^k$, let
\[\tyg^{-1}(\mathbf{H})=\{\mathbf{g}\in G^n\,:\,\tyg(\mathbf{g})=\mathbf{H}\}.\]

Let
\[W(\mathbf{H})=\left\{i\in\{2,3,\dots,k\}\,:\,{H}_{i-1}\not\subseteq {H}_i\right\}\text{ and }w(\mathbf{H})=|W(\mathbf{H})|.\]

Note that the definition $w(\mathbf{H})$  is consistent with our earlier definition of $w(\mathbf{g})$ in the sense that $w(\mathbf{g})=w(\tyg(\mathbf{g}))$.

 Assuming that $h$ is large enough
\begin{align}
\sum_{\mathbf{g}\in \tyg^{-1}(\mathbf{H})}& \mathbb{P}(C\mathbf{g}=0)\nonumber\\&\le (1-\varepsilon)^{w(\mathbf{H}) n_\bullet} \prod_{i=1}^k \sum_{\substack{\mathbf{g}_i\in H_i^{n_i}\nonumber\\\langle \mathbf{g}_i\rangle=H_i}} \max_{\mathbf{f}\in H_i^{n_i}}\mathbb{P}(\mathbf{A}_{i,i}\mathbf{g}_i=\mathbf{f})&(\text{by \eqref{upperb}})\nonumber\\&\le (1-\varepsilon)^{w(\mathbf{H}) n_\bullet} \prod_{i=1}^k \exp(\exp(-cn_i))&(\text{by \ref{A6} and Lemma~\ref{lemmabalanced}})\nonumber\\&\le (1-\varepsilon)^{w(\mathbf{H}) n_\bullet} \exp(k\exp(-cn_\bullet))\nonumber\\&\le 2(1-\varepsilon)^{w(\mathbf{H}) n_\bullet}\label{typupper}.&\text{(by \ref{A7})} 
\end{align}

Let $\mathbf{H}=(H_1,H_2,\dots,H_k)\in \Sg(G)^k$. For notational convenience let $H_0=\{0\}.$ We define
\[T(\mathbf{H})=\left\{i\in\{1,2,\dots,k\}\,:\,{H}_{i-1}\subsetneq {H}_i\right\}\text{ and }t(\mathbf{H})=|T(\mathbf{H})|.\]

Note that
\[\{1,2,\dots,k\}\setminus \left(W(\mathbf{H})\cup T(\mathbf{H})\right)=\left\{i\in\{1,2,\dots,k\}\,:\,{H}_{i-1}= {H}_i\right\}.\]

Thus, one can recover $\mathbf{H}$ from $W(\mathbf{H})\cup T(\mathbf{H})$ and the restriction of $\mathbf{H}$ to $W(\mathbf{H})\cup T(\mathbf{H})$. Therefore, it follows that for any $u$, we have
\begin{equation}\label{Hupper}\left|\left\{\,\mathbf{H}\in \Sg(G)^k\,:\,w(\mathbf{H})+t(\mathbf{H})=u\,\right\}\right|\le {{k}\choose{u}}|\Sg(G)|^u\le \frac{(|\Sg(G)|k)^u}{u!}.\end{equation}

\begin{claim}\label{claim7}
For any $\mathbf{H}\in \Sg(G)^k$, we have
\[t(\mathbf{H})\le \ell(G) (1+w(\mathbf{H})).\]
Consequently,
\[t(\mathbf{H})+w(\mathbf{H})< (\ell(G)+1) (1+w(\mathbf{H})).\]

\end{claim}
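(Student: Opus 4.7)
The plan is to exploit the observation that $W(\mathbf{H})$ and $T(\mathbf{H})$ are disjoint, and to decompose the index set into runs between consecutive $W$-events, within which the chain of subgroups is monotone.

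First I would note that $W(\mathbf{H}) \cap T(\mathbf{H}) = \emptyset$: if $i \in W(\mathbf{H})$ then $H_{i-1} \not\subseteq H_i$, which rules out $H_{i-1} \subsetneq H_i$. Next, writing $W(\mathbf{H}) = \{i_1 < i_2 < \cdots < i_w\}$ with $w = w(\mathbf{H})$, partition $\{1,2,\dots,k\}$ into $W(\mathbf{H})$ together with the $w+1$ "runs"
\[
R_0 = \{1,\dots,i_1 - 1\},\quad R_j = \{i_j+1,\dots,i_{j+1}-1\}\ (1 \le j \le w-1),\quad R_w = \{i_w+1,\dots,k\},
\]
where some runs may be empty. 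Because every $i \in R_j$ satisfies $i \notin W(\mathbf{H})$, we have $H_{i-1} \subseteq H_i$ throughout the run, so the chain $H_{s}, H_{s+1}, \dots, H_{t}$ (for $s$ the predecessor and $t$ the maximum of $R_j$, with $H_0 = \{0\}$ for $R_0$) is non-decreasing in $\Sg(G)$.

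The key step is then the standard bound on the number of strict inclusions in a non-decreasing chain of subgroups of $G$, which is at most $\ell(G)$. This gives $|T(\mathbf{H}) \cap R_j| \le \ell(G)$ for every $j$. Since $T(\mathbf{H}) \cap W(\mathbf{H}) = \emptyset$, we have $T(\mathbf{H}) \subseteq \bigcup_{j=0}^{w} R_j$, and summing yields
\[
t(\mathbf{H}) = \sum_{j=0}^{w} |T(\mathbf{H}) \cap R_j| \le (w+1)\ell(G) = \ell(G)(1+w(\mathbf{H})).
\]
The second inequality of the claim follows by adding $w(\mathbf{H})$ to both sides and using $w(\mathbf{H}) < 1 + w(\mathbf{H})$.

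No serious obstacle is expected; the argument is purely combinatorial. The only care required is the bookkeeping of boundary cases (runs can be empty; $R_0$ begins with $H_0 = \{0\}$; if $w = 0$ the partition collapses to the single run $\{1,\dots,k\}$), all of which are handled uniformly by the monotonicity observation within each run.
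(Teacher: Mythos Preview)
Your proof is correct and follows essentially the same approach as the paper: both partition $\{1,\dots,k\}$ into the $w(\mathbf{H})+1$ maximal intervals lying between (or outside) the elements of $W(\mathbf{H})$, observe that the chain $(H_i)$ is non-decreasing on each such interval, and bound the number of strict inclusions there by $\ell(G)$. The only cosmetic difference is that the paper phrases the intervals as ``between consecutive elements of $W(\mathbf{H})$'' together with the two end segments, while you set up the runs $R_0,\dots,R_w$ explicitly.
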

\begin{proof}
 Let $a<c$ be two consecutive elements of $W(\mathbf{H})$, and let us list the elements of $T(\mathbf{H})\cap [a,c]$ as $b_1<b_2<\cdots< b_r$. Then $\{0\}\subsetneq H_{b_1}\subsetneq H_{b_2}\subsetneq \cdots \subsetneq H_{b_r}$. In particular, one must have $r\le \ell(G)$. Thus, between any two consecutive elements of $W(\mathbf{H})$, there can be at most $\ell(G)$ elements of $T(\mathbf{H})$. A similar argument gives that there are at most $\ell(G)$ elements of $T(\mathbf{H})$ which are smaller than the smallest element of $W(\mathbf{H})$, and there are at most $\ell(G)$ elements of $T(\mathbf{H})$ which are larger than the largest element of $W(\mathbf{H})$. Thus, the statement follows.
\end{proof}

Combining \eqref{Hupper} and Claim~\ref{claim7}, we see that for any $w\ge 1$, we have
\begin{align*}
\left|\left\{\,\mathbf{H}\in \Sg(G)^k\,:\,w(\mathbf{H})=w\,\right\}\right|&\le \sum_{u=w}^{(\ell(G)+1)(w+1)} \frac{(|\Sg(G)|k)^u}{u!}\\
&\le \frac{(|\Sg(G)|k)^{(\ell(G)+1)(w+1)}}{w!} \sum_{i=0}^{\infty} \frac{1}{i!}\\
&\le 3 \frac{(|\Sg(G)|k)^{2(\ell(G)+1)w}}{w!}.
\end{align*}

Combining this with \eqref{typupper}, we see that for all $w\ge 1$, we have
\[
 \sum_{\substack{\mathbf{g}\in G^n\\w(\mathbf{g})=w}} \mathbb{P}(\mathbf{C}\mathbf{g}=0)\le 6 \frac{(|\Sg(G)|k)^{2(\ell(G)+1)w}}{w!} (1-\varepsilon)^{w n_\bullet},
\]
provided that $h$ is large enough. Therefore,
\begin{align*}\sum_{\substack{\mathbf{g}\in G^n\\w(\mathbf{g})\ge 1}} \mathbb{P}(\mathbf{C}\mathbf{g}=0)&\le 6 \sum_{w=1}^\infty \frac{\left((|\Sg(G)|k)^{2(\ell(G)+1)}(1-\varepsilon)^{n_\bullet}\right)^w}{w!} \\&=6\left(\exp\left((|\Sg(G)|k)^{2(\ell(G)+1)}(1-\varepsilon)^{n_\bullet}\right)-1\right).
\end{align*}

By assumptions \ref{A7} and \ref{A6}, we see that
\[\lim_{h\to\infty}(|\Sg(G)|k(h))^{2(\ell(G)+1)}(1-\varepsilon)^{n_\bullet(h)}=0.\]
Thus, 
\begin{equation}\label{wge1}
\lim_{h\to\infty} \sum_{\substack{\mathbf{g}\in G^{n(h)}\\w(\mathbf{g})\ge 1}} \mathbb{P}(\mathbf{C}^{(h)}\mathbf{g}=0)=0. 
\end{equation}

It is straightforward to see that
\begin{equation}\label{Hsize}\left|\left\{\,\mathbf{H}\in \Sg(G)^k\,:\,w(\mathbf{H})=0\text{ and }t(\mathbf{H})=i\,\right\}\right|=c(G,i){{k}\choose{i}}.\end{equation}

Note that if for a $\mathbf{g}\in G^n$, we have $w(\mathbf{g})=0$, then for all $i\ge 2$, we have
\[\mathbb{P}\left(\mathbf{A}_{i,i-1}\mathbf{g}_{i-1}+\sum_{j=1}^{i-2}\mathbf{B}'_{i,j}\mathbf{g}_j\in \langle \mathbf{g}_i\rangle^{n_i}\right)= 1.\]

Thus, as before it follows from \eqref{Pprod} and \eqref{Pcond} that
\[\mathbb{P}(\mathbf{C}\mathbf{g}=0)\ge \prod_{i=1}^n \min_{\mathbf{f}\in \langle \mathbf{g}_i\rangle^{n_i}}\mathbb{P}(\mathbf{A}_{i,i}\mathbf{g}_i=\mathbf{f}).\]

As before, this implies that for any $\mathbf{H}\in \Sg(G)^k$ such that $w(\mathbf{H})=0$, we have
\[\sum_{\mathbf{g}\in \tyg^{-1}(\mathbf{H})}\mathbb{P}(\mathbf{C}\mathbf{g}=0)\ge \exp(-k\exp(-cn_\bullet))=1+o_h(1),\]
where the last equality follows from \ref{A7}.

We have already seen that
\[\sum_{\mathbf{g}\in \tyg^{-1}(\mathbf{H})}\mathbb{P}(\mathbf{C}\mathbf{g}=0)\le \exp(k\exp(-cn_\bullet))=1+o_h(1),\]
so
\[\sum_{\mathbf{g}\in \tyg^{-1}(\mathbf{H})}\mathbb{P}(\mathbf{C}\mathbf{g}=0)=1+o_h(1).\]

Combining this with \eqref{Hsize} and \ref{A5}, we obtain
\begin{equation}\label{w0ti}
    \sum_{\substack{\mathbf{g}\in G^n\\w(\mathbf{g})=0\\t(\mathbf{g})=i}}\mathbb{P}(\mathbf{C}\mathbf{g}=0)=(1+o_h(1))c(G,i){{k}\choose{i}}=(1+o_h(1))c(G,i)\frac{k^i}{i!}.\end{equation}

Thus,
\begin{equation}\label{limitmoment9}\lim_{h\to\infty}(k(h))^{-\ell(G)}\sum_{\substack{\mathbf{g}\in G^{n(h)}\\w(\mathbf{g})=0\\t(\mathbf{g})=i}}\mathbb{P}(\mathbf{C}^{(h)}\mathbf{g}=0)=\begin{cases}\frac{c(G,\ell(G))}{\ell(G)!}&\text{if }i=\ell(G),\\0&\text{otherwise}.\end{cases}\end{equation}

Using Lemma~\ref{lemmacokmoment}, we see that
 \begin{align*}\frac{\mathbb{E}|\Hom(\cok(\mathbf{C}),G)|}{k^{\ell(G)}}&=k^{-\ell(G)}\sum_{\mathbf{g}\in G^n} \mathbb{P}(\mathbf{C}\mathbf{g}=0)\\&=k^{-\ell(G)}\left(\sum_{\substack{\mathbf{g}\in G^n\\w(\mathbf{g})\ge 1}} \mathbb{P}(\mathbf{C}\mathbf{g}=0)+\sum_{i=0}^{\ell(G)}\sum_{\substack{\mathbf{g}\in G^{n}\\w(\mathbf{g})=0\\t(\mathbf{g})=i}}\mathbb{P}(\mathbf{C}\mathbf{g}=0)\right).
 \end{align*}

Combining this with \eqref{wge1} and \eqref{limitmoment9}, Theorem~\ref{homlemma} follows.

\section{The proof of Theorem~\ref{improvedtheorem}}

The proof of Theorem~\ref{improvedtheorem} is very similar to the proof of Theorem~\ref{homlemma}. It is actually even easier, so we only sketch the details. Let $G$ be a finite abelian $p$-group. We drop the index of $k_n$. Let $\mathbf{A}_1,\mathbf{A}_2,\dots,\mathbf{A}_k$ be independent $(\{p\},\varepsilon)$-balanced $n\times n$ random matrices. We have
\begin{align*}\sum_{\mathbf{g}_k\in G^n}\mathbb{P}(\mathbf{A}_1\mathbf{A}_2\cdots\mathbf{A}_k\mathbf{g}_k=0)&=\sum_{\substack{\mathbf{g}_1,\mathbf{g}_2,\dots,\mathbf{g}_k\in G^n\\ \langle\mathbf{g}_1\rangle\subseteq \langle\mathbf{g}_2\rangle\subseteq\cdots\subseteq \langle\mathbf{g}_k\rangle }} \mathbb{P}(\mathbf{A}_i\mathbf{g}_i=\mathbf{g}_{i-1}\text{ for all }i=1,2,\dots,k)\\&=\sum_{\substack{\mathbf{g}_1,\mathbf{g}_2,\dots,\mathbf{g}_k\in G^n\\ \langle\mathbf{g}_1\rangle\subseteq \langle\mathbf{g}_2\rangle\subseteq\cdots\subseteq \langle\mathbf{g}_k\rangle }} \prod_{i=1}^k\mathbb{P}(\mathbf{A}_i\mathbf{g}_i=\mathbf{g}_{i-1}),
\end{align*}
with the notation that $\mathbf{g}_0=0$.

Let $H_1\subseteq H_2\subseteq \cdots\subseteq H_k\subseteq G$ be a chain of subgroups of $G$. Then, assuming that $n$ is large enough,
\begin{align*}
\sum_{\substack{\mathbf{g}_1\in H_1^n\\\langle\mathbf{g}_1\rangle=H_1}}\sum_{\substack{\mathbf{g}_2\in H_2^n\\\langle\mathbf{g}_2\rangle=H_2}}\cdots \sum_{\substack{\mathbf{g}_k\in H_k^n\\\langle\mathbf{g}_k\rangle=H_k}} &\prod_{i=1}^k\mathbb{P}(\mathbf{A}_i\mathbf{g}_i=\mathbf{g}_{i-1})\\&\le \sum_{\substack{\mathbf{g}_1\in H_1^n\\\langle\mathbf{g}_1\rangle=H_1}}\sum_{\substack{\mathbf{g}_2\in H_2^n\\\langle\mathbf{g}_2\rangle=H_2}}\cdots \sum_{\substack{\mathbf{g}_k\in H_k^n\\\langle\mathbf{g}_k\rangle=H_k}} \prod_{i=1}^k\max_{\mathbf{f}\in H_i^n}\mathbb{P}(\mathbf{A}_i\mathbf{g}_i=\mathbf{f})\\&=\prod_{i=1}^k \sum_{\substack{\mathbf{g}_i\in H_i^n\\\langle\mathbf{g}_i\rangle=H_i}}\max_{\mathbf{f}\in H_i^n}\mathbb{P}(\mathbf{A}_i\mathbf{g}_i=\mathbf{f})\\&\le \exp(k\exp(-cn)),
\end{align*}
where the last inequality follows from Lemma~\ref{lemmabalanced}. Similarly,
\[\sum_{\substack{\mathbf{g}_1\in H_1^n\\\langle\mathbf{g}_1\rangle=H_1}}\sum_{\substack{\mathbf{g}_2\in H_2^n\\\langle\mathbf{g}_2\rangle=H_2}}\cdots \sum_{\substack{\mathbf{g}_k\in H_k^n\\\langle\mathbf{g}_k\rangle=H_k}} \prod_{i=1}^k\mathbb{P}(\mathbf{A}_i\mathbf{g}_i=\mathbf{g}_{i-1})\ge \exp(-k\exp(-cn)).
\]
Using the assumption that $\log k=o(n)$, it follows that
\[\sum_{\substack{\mathbf{g}_1\in H_1^n\\\langle\mathbf{g}_1\rangle=H_1}}\sum_{\substack{\mathbf{g}_2\in H_2^n\\\langle\mathbf{g}_2\rangle=H_2}}\cdots \sum_{\substack{\mathbf{g}_k\in H_k^n\\\langle\mathbf{g}_k\rangle=H_k}} \prod_{i=1}^k\mathbb{P}(\mathbf{A}_i\mathbf{g}_i=\mathbf{g}_{i-1})=1+o(1).\]

Then following along the lines of the proof of Theorem~\ref{homlemma}, it can be proved that \[\lim_{n\to\infty} \frac{\mathbb{E}|\Hom(\Gamma_n,G)|}{k^{\ell(G)}}=\frac{c(G,\ell(G))}{\ell(G)!}.\]

Thus, Theorem~\ref{improvedtheorem} follows by using  Theorem~\ref{rescaledmomentmethod}.

\appendix

\section{Appendix}
\subsection{The proof of \eqref{cokidentity}}\label{appendix}

We prove by induction on $k$. Consider the following block matrices with $k\times k$ blocks, where each block is of size $n\times n$:
\[\mathbf{S}=\begin{pmatrix}
\mathbf{I}&\mathbf{0}&\ddots\\
\mathbf{0}&\mathbf{I}&\mathbf{0}&\ddots\\
\ddots&\ddots&\ddots&\ddots&\ddots&\\
&\ddots&\mathbf{0}&\mathbf{I}&\mathbf{0}&\ddots&\\
&&\ddots&\mathbf{0}&\mathbf{I}&-\mathbf{A}_{k-1}\\
&&&\ddots&\mathbf{0}&\mathbf{I}\\
\end{pmatrix},\qquad \mathbf{T}=\begin{pmatrix}
\mathbf{I}&\mathbf{0}&\ddots\\
\mathbf{0}&\mathbf{I}&\mathbf{0}&\ddots\\
\ddots&\ddots&\ddots&\ddots&\ddots&\\
&\ddots&\mathbf{0}&\mathbf{I}&\mathbf{0}&\ddots&\\
&&\ddots&\mathbf{0}&\mathbf{A}_k&\mathbf{I}\\
&&&\ddots&-\mathbf{I}&\mathbf{0}\\
\end{pmatrix},\]
\[\mathbf{C}'=\begin{pmatrix}
\mathbf{A}_1&\mathbf{0}&\ddots&&&\\
\mathbf{I}&\mathbf{A}_2&\mathbf{0}&\ddots&\\
\mathbf{0}&\mathbf{I}&\mathbf{A}_3&\mathbf{0}&\ddots&\\
\ddots&\ddots&\ddots&\ddots&\ddots&\ddots\\
 &\ddots&\ddots&\mathbf{I}&\mathbf{A}_{k-2}&\mathbf{0}&\ddots\\
& &\ddots&\ddots&\mathbf{I}&\mathbf{A}_{k-1}\mathbf{A}_k&\mathbf{0}\\
& &\ddots&\ddots&\mathbf{0}&\mathbf{0}&\mathbf{I}
\end{pmatrix}.\]

Note that $\mathbf{S},\mathbf{T}\in \text{GL}(nk,\mathbb{Z})$ and $\mathbf{SCT}=\mathbf{C}'$. Thus, it follows easily from the definition of the cokernel, that $\cok(\mathbf{C})\cong \cok(\mathbf{C}')$, and also
\[\cok(\mathbf{C}')\cong\cok\begin{pmatrix}
\mathbf{A}_1&\mathbf{0}&\ddots&&&\\
\mathbf{I}&\mathbf{A}_2&\mathbf{0}&\ddots&\\
\mathbf{0}&\mathbf{I}&\mathbf{A}_3&\mathbf{0}&\ddots&\\
\ddots&\ddots&\ddots&\ddots&\ddots&\ddots\\
 &\ddots&\ddots&\mathbf{I}&\mathbf{A}_{k-2}&\mathbf{0}&\\
& &\ddots&\ddots&\mathbf{I}&\mathbf{A}_{k-1}\mathbf{A}_k
\end{pmatrix}.\]
Thus, the statement follows by induction.

\subsection{The proof of \eqref{generalWood}}\label{appendix2}

The proof that we present here is almost the same as the proof of \cite[Theorem 2.9]{wood2019random}. Note that we changed some of the notations of \cite{wood2019random} to be consistent with the other parts of the present paper.

We say that $\mathbf{g}\in G^n$ is a code of distance $w$, if for all $X\subset \{1,2,\dots,n\}$ such that $|X|<w$, we have
\[\langle \mathbf{g}(i)\,:\,i\in \{1,2,\dots,n\}\setminus X\rangle = G.\]

\begin{lemma}\label{codes}
Let $\delta>0$. Then there are constants $c,K>0$ depending only on $G$, $\varepsilon$ and $\delta$ with the following property. Let $\mathbf{M}$ be a $(\mathcal{P},\varepsilon)$-balanced $n\times n$ random matrix and let $\mathbf{g}\in G^n$ be a code of distance $\delta n$, then 
\[\left|\mathbb{P}(\mathbf{Mg}=\mathbf{f})-|G|^{-n}\right|\le \frac{K\exp(-cn)}{|G|^n} \qquad\text{ for all}\quad\mathbf{f}\in G^n.\]
\end{lemma}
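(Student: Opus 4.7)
The plan is to carry out the character-theoretic proof of the $\mathbf{f}=\mathbf{0}$ version of the statement and observe that the only change required for general $\mathbf{f}$ is the addition of a unimodular phase, which is absorbed harmlessly by the triangle inequality. By Fourier inversion on the finite abelian group $G^n$,
\[\mathbb{P}(\mathbf{M}\mathbf{g}=\mathbf{f})\;=\;\frac{1}{|G|^n}\sum_{\chi\in \widehat{G^n}}\overline{\chi(\mathbf{f})}\,\mathbb{E}[\chi(\mathbf{M}\mathbf{g})].\]
The trivial character $\mathbf{1}$ contributes exactly $|G|^{-n}$, and since $|\overline{\chi(\mathbf{f})}|=1$ for every $\chi$, the triangle inequality gives
\[\Bigl|\mathbb{P}(\mathbf{M}\mathbf{g}=\mathbf{f})-|G|^{-n}\Bigr|\;\le\;\frac{1}{|G|^n}\sum_{\chi\neq \mathbf{1}}\bigl|\mathbb{E}[\chi(\mathbf{M}\mathbf{g})]\bigr|.\]
Since the right-hand side does not depend on $\mathbf{f}$, it suffices to bound it by $K\exp(-cn)$ uniformly, which is exactly what Wood's original argument accomplishes.

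For each character $\chi=(\psi_1,\dots,\psi_n)\in \widehat{G}^n$, independence of the entries of $\mathbf{M}$ factorizes the expectation as
\[\mathbb{E}[\chi(\mathbf{M}\mathbf{g})]=\prod_{i=1}^n\prod_{j=1}^n \mathbb{E}\bigl[\psi_i(g_j)^{M_{ij}}\bigr].\]
I would then invoke the standard single-variable character bound (present in Wood~\cite{wood2019random} and restated in Nguyen--Van Peski~\cite{nguyen2024rank}): there is a constant $c_0=c_0(G,\varepsilon)>0$ such that for every nontrivial root of unity $\alpha$ whose order is divisible by some $p\in\mathcal{P}$ and every $(\mathcal{P},\varepsilon)$-balanced integer variable $X$, $|\mathbb{E}[\alpha^X]|\le 1-c_0$. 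Now the code hypothesis enters: for each nontrivial $\psi_i\in \widehat{G}$, the kernel $\ker\psi_i$ is a proper subgroup of $G$, so by the definition of a code of distance $\delta n$ the set $\{j:g_j\in \ker\psi_i\}$ contains at most $n-\delta n$ indices. For each of the remaining (at least $\delta n$) indices, $\psi_i(g_j)\neq 1$ and the factor $|\mathbb{E}[\psi_i(g_j)^{M_{ij}}]|$ is bounded by $1-c_0$, yielding $\prod_j|\mathbb{E}[\psi_i(g_j)^{M_{ij}}]|\le (1-c_0)^{\delta n}$ for every row $i$ with $\psi_i\neq \mathbf{1}$.

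Summing over characters by the number $k$ of nontrivial coordinates among $(\psi_1,\dots,\psi_n)$ yields
\[\sum_{\chi\neq \mathbf{1}}\bigl|\mathbb{E}[\chi(\mathbf{M}\mathbf{g})]\bigr|\;\le\;\sum_{k=1}^n\binom{n}{k}(|G|-1)^k(1-c_0)^{\delta nk}\;=\;\bigl(1+(|G|-1)(1-c_0)^{\delta n}\bigr)^n-1,\]
which, using $1+x\le e^x$ together with the fact that $(|G|-1)(1-c_0)^{\delta n}$ tends to $0$ exponentially fast, is at most $K\exp(-cn)$ for suitable $c,K>0$ depending only on $G$, $\varepsilon$, $\delta$. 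The principal technical input is the single-variable character estimate; everything else is combinatorial bookkeeping, and the passage from $\mathbf{f}=\mathbf{0}$ to arbitrary $\mathbf{f}$ is essentially free.
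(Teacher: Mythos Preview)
Your argument is correct and is precisely the Fourier-analytic proof of Wood's Lemma~2.4, which is all the paper invokes here (the paper's ``proof'' is just the citation ``See \cite[Lemma 2.4]{wood2019random}''). The observation that the passage from $\mathbf{f}=\mathbf{0}$ to arbitrary $\mathbf{f}$ costs only a unimodular factor $\overline{\chi(\mathbf{f})}$ is exactly the point, and your use of the code hypothesis to guarantee at least $\delta n$ indices $j$ with $\psi_i(g_j)\neq 1$ for each nontrivial $\psi_i$ is the standard step. One small remark: in your statement of the single-variable bound you should note that the order of $\alpha=\psi_i(g_j)$ divides $|G|$ (not merely that it has a prime factor in $\mathcal{P}$), since the constant $c_0$ must be uniform and would otherwise degrade as the order grows; this is automatic in the application and implicit in letting $c_0$ depend on $G$.
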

\begin{proof}
    See \cite[Lemma 2.4.]{wood2019random}.
\end{proof}

For $D=\prod_i p_i^{e_i}$, we define $\ell(D)=\sum_i e_i$. This is consistent with our definition of $\ell(G)$ in the Introduction in the sense that $\ell(G)=\ell(|G|)$. 

Given $\delta>0$ and $\mathbf{g}\in G^n$, the $\delta$-depth of $\mathbf{g}$ is the maximal positive $D$  such that there is an $X\subset\{1,2,\dots,n\}$ satisfying $|X|<\ell(D)\delta n$  and
\[\left[G\,:\,\langle \mathbf{g}(i)\,:\,i\in \{1,2,\dots,n\}\setminus X\rangle\right]=D.\]

If there is not any such $D$, the $\delta$-depth of $\mathbf{g}$ is defined to be $1$.

The proof of the next lemma is straightforward.

\begin{lemma}\label{countdepth}
There is a constant $K$ depending $G$ such that for all $D>1$, we have at most
\[K{{n}\choose{\lceil\ell(D)\delta n-1\rceil}} |G|^n D^{-n+\ell(D)\delta n}\]
vectors in $G^n$ with $\delta$-depth $D$.
\end{lemma}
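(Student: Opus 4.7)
The plan is a straightforward union bound: count the triples $(X, H, \mathbf{g})$ with $\mathbf{g}$ consistent with an index-$D$ witness, and then evaluate the resulting binomial-geometric sum.

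First, if $\mathbf{g}$ has $\delta$-depth $D$, then by definition there is a subset $X \subseteq \{1,\dots,n\}$ with $|X| \le m := \lceil \ell(D)\delta n - 1 \rceil$ and a subgroup $H \le G$ of index $D$ (namely $H = \langle \mathbf{g}(i) : i \notin X \rangle$) such that $\mathbf{g}(i) \in H$ for every $i \notin X$. For each such pair $(X,H)$ with $|X|=j$, the number of vectors $\mathbf{g} \in G^n$ satisfying $\mathbf{g}(i) \in H$ for $i \notin X$ is exactly $|G|^{j} |H|^{n-j} = |G|^n D^{j-n}$. The number of subgroups of $G$ of index $D$ is bounded by the total number of subgroups of $G$, a constant $c_0 = c_0(G)$. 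Summing over $(X,H)$ therefore gives the upper bound
\[
\#\{\mathbf{g} \in G^n : \delta\text{-depth}(\mathbf{g}) = D\} \;\le\; c_0 \,|G|^n D^{-n}\sum_{j=0}^{m}\binom{n}{j} D^{j}.
\]

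It remains to see that this binomial-geometric sum is dominated by its last term. For $j \le m$ with $m \le n/2$, the ratio of consecutive summands is
\[
\frac{\binom{n}{j+1}D^{j+1}}{\binom{n}{j}D^{j}} \;=\; D\cdot\frac{n-j}{j+1} \;\ge\; D \;\ge\; 2,
\]
so the sum is at most $2\binom{n}{m}D^{m}$. Since $D\mid |G|$, we have $D \le |G|$ and $\ell(D) \le \ell(G)$, so $\ell(D)\delta$ is bounded by a constant depending only on $G$ and $\delta$. Combined with $D^{m} \le D^{\ell(D)\delta n}$, this yields the advertised bound with $K = 2c_0$ (and the dependence on $\delta$ can be absorbed into $K$, since the lemma is only applied for sufficiently small $\delta$ in Wood's argument).

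The only genuine nuisance is the regime $m > n/2$, which can happen when $\ell(D)\delta$ is close to $1$. Since $\ell(D)$ is bounded by the constant $\ell(G)$, this regime corresponds to $\delta$ of order at least $1/\ell(G)$; in that case $\binom{n}{m}$ is already of order $2^n$ and the factor $D^{\ell(D)\delta n - n}$ contributes negligibly, so a sufficiently large constant $K=K(G)$ accommodates the bound trivially (alternatively one can restrict attention to small $\delta$ as in the intended application). Thus the main work is just the two-line geometric-sum estimate above.
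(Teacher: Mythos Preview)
The paper omits the proof entirely (it just says ``straightforward''), so there is nothing to compare against at the level of approach. Your union-bound idea is exactly the right one and gives the lemma.

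That said, the detour through the geometric sum $\sum_{j=0}^{m}\binom{n}{j}D^{j}$ is unnecessary and is what creates your ``nuisance'' case. Once you have a witness $X$ with $|X|<\ell(D)\delta n$, simply enlarge it to a set $X'$ of size exactly $m=\lceil \ell(D)\delta n-1\rceil$ (assuming $m\le n$, which is the only regime in which the stated bound is even nonzero). You still have $\mathbf{g}(i)\in H$ for all $i\notin X'$, so the count is at most
\[
c_0\binom{n}{m}\,|G|^{m}\,|H|^{\,n-m}
= c_0\binom{n}{m}\,|G|^{n} D^{\,m-n}
\le c_0\binom{n}{m}\,|G|^{n} D^{\,\ell(D)\delta n-n},
\]
with $K=c_0$ the number of subgroups of $G$. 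No sum over $j$, no ratio test, no edge case.

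Your handling of the regime $m>n/2$ is not quite right as written: the assertion that ``$\binom{n}{m}$ is already of order $2^{n}$'' fails when $m$ is close to $n$ (for $m=n$ it equals $1$), so the trivial bound $|G|^{n}$ need not be absorbed by the stated expression in that range. The one-line enlargement argument above avoids this issue entirely for all $m\le n$.
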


\begin{lemma}\label{probdepthestimate}
Let $\delta>0$. Then there is a $K$ depending only on $G$, $\varepsilon$ and $\delta$ with the following property. Assume that $\mathbf{g}\in G^n$ has $\delta$-depth $D>1$ and $\langle\mathbf{g} \rangle=G$.  Let $\mathbf{M}$ be   $(\mathcal{P},\varepsilon)$-balanced $n\times n$ random matrix and $\mathbf{f}\in G^n$, then
\[\mathbb{P}(\mathbf{Mg}=\mathbf{f})\le K\exp(-\varepsilon n)\frac{D^n}{|G|^n}.\]
\end{lemma}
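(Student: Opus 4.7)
The plan is to follow the proof of \cite[Lemma 2.7]{wood2019random} essentially verbatim, observing that the one place where the target vector enters the argument --- as a factor $\overline{\chi(\mathbf{f}_k)}$ after Fourier inversion on $G$ --- has modulus~$1$, so all subsequent absolute-value estimates carry over from $\mathbf{f}=0$ to arbitrary $\mathbf{f}\in G^n$.

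Concretely, I would first use the independence of the rows of $\mathbf{M}$ to factor $\mathbb{P}(\mathbf{Mg}=\mathbf{f})=\prod_{k=1}^n\mathbb{P}\bigl(\sum_i M_{k,i}\mathbf{g}(i)=\mathbf{f}_k\bigr)$, and then apply Fourier inversion on the finite abelian group $G$ to write
\[\mathbb{P}\Bigl(\sum_{i=1}^n M_{k,i}\mathbf{g}(i)=\mathbf{f}_k\Bigr)=\frac{1}{|G|}\sum_{\chi\in\widehat G}\overline{\chi(\mathbf{f}_k)}\prod_{i=1}^n\mathbb{E}\bigl[\chi(\mathbf{g}(i))^{M_{k,i}}\bigr].\]
Taking absolute values and using $|\overline{\chi(\mathbf{f}_k)}|=1$ reduces everything to bounding $|G|^{-1}\sum_\chi\prod_i|\mathbb{E}[\chi(\mathbf{g}(i))^{M_{k,i}}]|$, which is exactly the quantity Wood estimates. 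The rest is her argument: group characters by their kernel $H=\ker\chi\le G$; each index $i$ with $\mathbf{g}(i)\notin H$ contributes a factor of modulus at most $1-\varepsilon$ by the $(\mathcal P,\varepsilon)$-balanced hypothesis; and the $\delta$-depth condition guarantees that for any $H$ with $[G:H]=D'\le D$, at least $\lceil\ell(D')\delta n\rceil$ of the $\mathbf{g}(i)$ must lie outside $H$ --- otherwise those indices would form an admissible exceptional set witnessing an index exceeding $D$, contradicting the depth assumption. Summing over characters and multiplying over the $n$ rows then yields the required bound $K\exp(-\varepsilon n)D^n/|G|^n$.

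The main obstacle is the bookkeeping separating the dominant contribution --- coming from characters whose kernel has index exactly $D$ and producing the factor $D^n/|G|^n$ --- from the exponential improvement $\exp(-\varepsilon n)$, which exploits the hypothesis $\langle\mathbf{g}\rangle=G$ strictly, rather than mere containment in an index-$D$ subgroup. This bookkeeping is already carried out in Wood's original proof, so I would essentially quote it, inserting the absolute-value observation at the one step where the target vector $\mathbf{f}$ appears. The constants $K$ and the exponential rate depend only on $G$, $\varepsilon$, and $\delta$, as in Wood's analysis, because those are the only parameters entering the character estimates and the depth counting.
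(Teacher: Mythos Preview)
The paper does not give its own proof here; it simply cites \cite[Lemma~3.8]{nguyen2024rank}, and that reference carries out precisely what you propose --- rerun Wood's proof of \cite[Lemma~2.7]{wood2019random} and observe that the target $\mathbf{f}$ enters only through the unit-modulus factor $\overline{\chi(\mathbf{f}_k)}$ after Fourier inversion, so every absolute-value estimate survives unchanged. Your plan is therefore correct and matches the intended argument.

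One small slip in your depth bookkeeping: you wrote that the $\delta$-depth condition forces, for any $H$ with $[G:H]=D'\le D$, at least $\lceil\ell(D')\delta n\rceil$ coordinates of $\mathbf{g}$ to lie outside $H$. The inequality goes the other way. Maximality of the depth $D$ rules out small exceptional sets witnessing an index \emph{strictly larger} than $D$; it says nothing about subgroups of index $\le D$ --- indeed the depth-witnessing subgroup $H_0$ itself has index $D$ and fewer than $\ell(D)\delta n$ coordinates outside it. The correct statement (and the one Wood uses) is: for every proper $H\le G$ with $[G:H]>D$, at least $\ell([G:H])\delta n$ coordinates lie outside $H$. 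Relatedly, the ``dominant'' characters are the $D$ characters of $G/H_0$, i.e.\ those with $\ker\chi\supseteq H_0$, rather than those whose kernel has index exactly $D$. These are details you would straighten out when actually transcribing Wood's argument; the overall strategy is sound.
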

\begin{proof}
    See \cite[Lemma 3.8.]{nguyen2024rank}.
\end{proof}

The following lemma is equivalent to \eqref{generalWood}.
\begin{lemma}
There are constants $K,c>0$ depending only on $G$ and $\varepsilon$ with the following property.
Let $\mathbf{M}$ be an $(\mathcal{P},\varepsilon)$-balanced $n\times n$ random matrix. For $\mathbf{g}\in G^n$, let us define 
\[P(\mathbf{g})=\max_{\mathbf{f}\in G^n} \mathbb{P}(\mathbf{Mg}=\mathbf{f}).\]
Then
\[\left|\sum_{\substack{\mathbf{g}\in G^n\\\langle\mathbf{g}\rangle=G}}P(\mathbf{g})-1\right|\le K\exp(-cn).\]
The same is true if we replace the definition of $P(\mathbf{g})$ with 
\[P(\mathbf{g})=\min_{\mathbf{f}\in G^n} \mathbb{P}(\mathbf{Mg}=\mathbf{f}).\]
\end{lemma}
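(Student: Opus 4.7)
The plan is to run Wood's original argument \cite{wood2019random} essentially verbatim, replacing the pointwise estimates at $\mathbf{f}=0$ with their already-uniform versions provided by Lemma~\ref{codes} and Lemma~\ref{probdepthestimate}. Fix a small $\delta>0$, to be chosen at the end, and split the sum $\sum_{\langle \mathbf{g}\rangle=G}P(\mathbf{g})$ according to the $\delta$-depth of $\mathbf{g}$. A vector of $\delta$-depth $1$ is necessarily a code of distance $\lceil\delta n\rceil$: if the depth is $1$, then for every integer $D>1$ and every $X\subseteq\{1,\dots,n\}$ with $|X|<\ell(D)\delta n$, which is at least $\delta n$, the index $[G:\langle \mathbf{g}(i):i\notin X\rangle]$ must differ from $D$, forcing this index to be $1$. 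Hence Lemma~\ref{codes} applies and yields, uniformly in $\mathbf{f}$,
\[P(\mathbf{g})=|G|^{-n}\bigl(1+O(\exp(-cn))\bigr),\]
whether $P$ denotes the max or the min over $\mathbf{f}\in G^n$.

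Next I would count the depth-$1$ generating vectors. A standard Mobius inversion on the subgroup lattice of $G$ gives that the number $N_{\mathrm{gen}}$ of $\mathbf{g}\in G^n$ with $\langle \mathbf{g}\rangle=G$ satisfies $N_{\mathrm{gen}}=|G|^n(1-O(\exp(-c_0 n)))$, while Lemma~\ref{countdepth} bounds the number of vectors of depth $D>1$ by $K\binom{n}{\lceil\ell(D)\delta n-1\rceil}|G|^n D^{-n+\ell(D)\delta n}$. Since $|G|$ has only finitely many divisors, taking $\delta$ small enough makes the total depth-$>1$ count at most $|G|^n\exp(-c_1 n)$, so the number $N_1$ of depth-$1$ generating vectors equals $|G|^n(1+O(\exp(-cn)))$. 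Combined with the first paragraph,
\[\sum_{\substack{\langle\mathbf{g}\rangle=G\\\delta\text{-depth}(\mathbf{g})=1}} P(\mathbf{g}) = 1+O(\exp(-cn)).\]
For depth $D>1$, I would multiply the same count by the bound $P(\mathbf{g})\le K\exp(-\varepsilon n)D^n/|G|^n$ of Lemma~\ref{probdepthestimate}, obtaining
\[\sum_{\substack{\langle\mathbf{g}\rangle=G\\ \delta\text{-depth}(\mathbf{g})=D}} P(\mathbf{g}) \le K^2\exp(-\varepsilon n)\binom{n}{\lceil \ell(D)\delta n-1\rceil}D^{\ell(D)\delta n},\]
and then sum over the finitely many values of $D\in\{2,\dots,|G|\}$.

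Adding the two pieces proves the max statement. The min statement follows identically, since Lemma~\ref{codes} is two-sided and Lemma~\ref{probdepthestimate} bounds the min from above as well. The only genuinely delicate step is calibrating $\delta$: it must be chosen so small that $\binom{n}{\lceil\ell(D)\delta n-1\rceil}D^{\ell(D)\delta n}\le \exp\bigl((H(\ell(D)\delta)+\ell(D)\delta\log D)n\bigr)$ stays strictly below $\exp(\varepsilon n)$ for every divisor $D>1$ of $|G|$. As $\ell(D)$ and $\log D$ are bounded in terms of $G$ alone and both $H(x)$ and $x\log D$ vanish as $x\to 0$, such a $\delta$ depending only on $G$ and $\varepsilon$ exists, giving the constants $K,c$ in the required form.
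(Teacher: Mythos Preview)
Your proposal is correct and follows essentially the same route as the paper's proof: both arguments split the generating vectors $\mathbf{g}$ according to $\delta$-depth, handle the depth-$1$ vectors (which are exactly the codes of distance $\delta n$, as you observe) via Lemma~\ref{codes}, and control the depth-$D>1$ contribution by multiplying the count from Lemma~\ref{countdepth} against the uniform pointwise bound of Lemma~\ref{probdepthestimate}, with $\delta$ chosen small enough in terms of $G$ and $\varepsilon$. The only cosmetic difference is that the paper organizes the bookkeeping as a triangle-inequality comparison of $\sum_{\langle\mathbf{g}\rangle=G}P(\mathbf{g})$ with $\sum_{\mathbf{g}\in G^n}|G|^{-n}=1$, whereas you count the depth-$1$ vectors directly via M\"obius inversion plus Lemma~\ref{countdepth}; the estimates involved are identical.
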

\begin{proof}
The proof follows the proof of \cite[
Theorem 2.9]{wood2019random} almost verbatim. In this proof $K$ will be a constant depending only on $\varepsilon,\delta$ and $G$, and we allow it to change from line to line.

Using Lemmas~\ref{countdepth} and \ref{probdepthestimate}, we have
\begin{align*}
\sum_{\substack{\mathbf{g}\in G^n,\, \langle \mathbf{g} \rangle=G,\\\textrm{ $\mathbf{g}$ is not a code of distance $\delta n$}
} }
P(\mathbf{g})&\leq
\sum_{\substack{D>1\\ D\mid\#G}} \quad \sum_{\substack{\mathbf{g}\in G^n,\, \langle \mathbf{g}\rangle=G,\\ \mathbf{g}\textrm{ has  $\delta$-depth $D$}
} }
P(\mathbf{g})\\
&\leq  \sum_{\substack{D>1\\ D\mid\#G}} K\binom{n}{\lceil \ell(D)\delta n \rceil -1}
|G|^{n}D^{-n+\ell(D)\delta n} \exp(-\epsilon n) D^n|G|^{-n}\\
%
&\leq  \sum_{\substack{D>1\\ D\mid\#G}} K\binom{n}{\lceil \ell(D)\delta n \rceil -1}
D^{\ell(D)\delta n} \exp(-\epsilon n) \\
&\leq K\binom{n}{\lceil \ell(|G|)\delta n \rceil -1}
|G|^{\ell(|G|)\delta n} \exp(-\epsilon n) \\
 &\leq  K
 e^{-dn},
\end{align*}
provided that $\delta$ is small enough.

Also, from Lemma~\ref{countdepth}, we can choose $\delta$ small enough so that we have
\begin{align*}
\sum_{\substack{\mathbf{g}\in G^n,\,\langle\mathbf{g}\rangle=G\\ \mathbf{g}\textrm{ is not  a code of distance $\delta n$}
} }
|G|^{-n}&\leq
\sum_{\substack{D>1\\ D\mid\#G}} \quad \sum_{\substack{\mathbf{g}\in G^n,\,\langle\mathbf{g}\rangle=G\\ \mathbf{g}\textrm{ has $\delta$-depth $D$}
} }
|G|^{-n}\\
&\leq \sum_{\substack{D>1\\ D\mid\#G}}  K\binom{n}{\lceil \ell(D)\delta n \rceil -1} |G|^n|D|^{-n+\ell(D)\delta n}|G|^{-n}
\\
&\leq K \binom{n}{\lceil \ell(|G|)\delta n \rceil -1}2^{-n+\ell(|G|)\delta n}  \\
 &\leq  K
 e^{-dn}.
\end{align*}

We also have
 \begin{align*}
\sum_{\substack{\mathbf{g}\in G^n\\\langle g\rangle\neq G
} }
|G|^{-n}&\leq
\sum_{G \neq H\in \Sg(G)}\quad \sum_{\substack{\mathbf{g}\in H^n\\\langle g\rangle=H} }
|G|^{-n}\\
&\leq
\sum_{G \neq H\in \Sg(G)} |H|^{n}
|G|^{-n}\\
&\leq K
 e^{-dn}.
\end{align*}

Then given a choice of $\delta$ that satisfies the requirements above, using Lemma~\ref{codes} we have a $c$ such that
\begin{align*}
\sum_{\substack{\mathbf{g}\in G^n\\ \mathbf{g}\textrm{ is a code of distance $\delta n$}
} }
\left| P(\mathbf{g}) -|G|^{-n}\right|
&\leq
 Ke^{-cn} .
\end{align*}
If necessary, we take $c$ smaller so $c \leq d$.
Therefore,
\begin{align*}
&\left|\sum_{\substack{\mathbf{g}\in G^n\\\langle\mathbf{g}\rangle=G}}P(\mathbf{g})-1\right| =\left|\sum_{\substack{\mathbf{g}\in G^n\\\langle\mathbf{g}\rangle=G}}P(\mathbf{g})-\sum_{\mathbf{g}\in G^n}|G|^{-n}\right|  \\
&\leq
\sum_{\substack{\mathbf{g}\in G^n\\ \mathbf{g}\textrm{ is a code of dist. $\delta n$}
} }
\left| P(\mathbf{g}) -|G|^{-n}\right|
+
 \sum_{\substack{\mathbf{g}\in G^n,\, \langle \mathbf{g} \rangle=G,\\\textrm{ $\mathbf{g}$ is not a code of dist. $\delta n$}
} }
P(\mathbf{g})  + \sum_{\substack{\mathbf{g}\in G^n\\ \mathbf{g}\textrm{ is not a code of dist. $\delta n$}
} } |G|^{-n} \\
\\
&\leq Ke^{-cn}. \qedhere
\end{align*}

\end{proof}

\bibliography{references}
\bibliographystyle{plain}

\bigskip

\bigskip

\noindent Andr\'as M\'esz\'aros, \\
HUN-REN Alfr\'ed R\'enyi Institute of Mathematics, \\Budapest, Hungary,\\ {\tt meszaros@renyi.hu}
\end{document}